\newtheorem{theo}{Theorem}
\newtheorem{defi}{Definition}
\newtheorem{lemma}{Lemma}
\newtheorem{cor}{Corollary}
\newtheorem{ques}{Question}
\newcommand{\N}{\mathbb{N}}
\title{Ramsey upper density of infinite graph factors}
\date{\vspace{-5ex}}
\author{
    J\'ozsef Balogh \thanks{Department of Mathematics, University of Illinois at Urbana-Champaign, Urbana, Illinois 61801, USA; and Moscow Institute of Physics and Technology, Russian Federation. Research is partially supported by NSF Grant DMS-1764123, Arnold O. Beckman Research Award (UIUC Campus Research Board RB 18132), the Langan Scholar Fund (UIUC), and the Simons Fellowship.}

    \and 
	Ander Lamaison\thanks{Faculty of Informatics, Marasyk University, Brno, Czech Republic. {\tt lamaison@fi.muni.cz}. Previous affiliation: Freie Universit\"at Berlin, Berlin, Germany. Funded by the Deutsche Forschungsgemeinschaft (DFG, German Research Foundation) under Germany's Excellence Strategy - The Berlin Mathematics Research Center MATH+ (EXC-2046/1, project ID: 390685689).}
	
	}
\begin{document}
\maketitle

\begin{abstract}
    The study of upper density problems on Ramsey theory was initiated by Erd\H{o}s and Galvin in 1993. In this paper we are concerned with the following problem: given a fixed finite graph $F$, what is the largest value of $\lambda$ such that every 2-edge-coloring of the complete graph on $\N$ contains a monochromatic infinite $F$-factor whose vertex set has upper density at least $\lambda$?
    
    Here we prove a new lower bound for this problem. For some choices of $F$, including cliques and odd cycles, this new bound is sharp, as it matches an older upper bound. For the particular case where $F$ is a triangle, we also give an explicit lower bound of $1-\frac{1}{\sqrt{7}}=0.62203\dots$, improving the previous best bound of 3/5. 
\end{abstract}

\section{Introduction}

Let $K_\N$ denote the complete graph on the natural numbers. Given a subset $S\subseteq\N$, its upper density is defined as $\bar d(S)=\limsup\limits_{t\rightarrow\infty}\frac{|S\cap[t]|}{t}$. Given a subgraph $G\subseteq K_\N$, its upper density is $\bar d(G)=\bar d(V(G))$.

\begin{defi}
Let $H$ be a countably infinite graph. The Ramsey (upper) density of $H$, denoted by $\rho(H)$, is the supremum of the values of $\lambda$ such that, in every red-blue-coloring of the edges of $K_\N$, there exists a monochromatic subgraph $H'\subseteq K_\N$ isomorphic to $H$ with $\bar d(H')\geq\lambda$.
\end{defi}

This concept was first introduced by Erd\H{o}s and Galvin \cite{ErdGal}, for the particular case of the infinite path $P_\infty$. They proved that $2/3\leq\rho(P_\infty)\leq 8/9$.  After some improvements on the lower bound by DeBiasio and McKenney~\cite{BiaKen} and by Lo, Sanhueza-Matamala and Wang~\cite{LoSanWan}, the Ramsey density of the infinite path was determined by Corsten, DeBiasio, Lang and the second author in~\cite{CDLL}, as $\rho(P_\infty)=(12+\sqrt{8})/17= 0.87226\dots$ The parameter for general $H$ was first defined by DeBiasio and McKenney~\cite{BiaKen}. Many results on the Ramsey density of different classes of graphs $H$ can be found in \cite{Lam} and \cite{CorBiaKen}.

We focus on the case of infinite factors. Given a finite graph $F$, the infinite $F$-factor, denoted by $\omega\cdot F$, is the graph formed by infinitely many pairwise disjoint copies of $F$. Several results relating to the Ramsey density of infinite factors appear in \cite{Lam}. 

Some of these are given in terms of a certain fixed function $f$ which is defined as follows. The reader should be forgiven for skipping such an unwieldy definition, as it will not be actually used in this paper. Instead, we will rely on the results from \cite{Lam} that use $f$. These will be treated as black boxes.

\begin{defi} Given a continuous function $g:[0,+\infty)\rightarrow\mathbb{R}$ and $\gamma,t\in\mathbb{R}$, we define \[\Gamma^+_\gamma(g,t)=\min\{x:\gamma x+g(x)\geq t\},\hskip 0.8cm \Gamma^-_\gamma(g,t)=\min\{x:\gamma x-g(x)\geq t\},\] where the minimum of the empty set is taken as $+\infty$. We define the function $h(\gamma)$ as the infimum, over all 1-Lipschitz functions $g$ with $g(0)=0$, of \[\limsup\limits_{t\rightarrow\infty}\frac{\Gamma^+_\gamma(g,t)+\Gamma^-_\gamma(g,t)}{t}.\] We take $f:(0,+\infty)\rightarrow\mathbb{R}$ as \[f(\lambda)=1-\frac{1}{\frac{2\lambda}{(1+\lambda)^2}h\left(\frac{\lambda-1}{\lambda+1}\right)+\frac{2\lambda}{1+\lambda}}.\] We extend this definition by continuity to $f(0)=1$ and $f(+\infty)=1/2$.\end{defi}

\begin{figure}
\begin{centering}
\includegraphics[width=60mm]{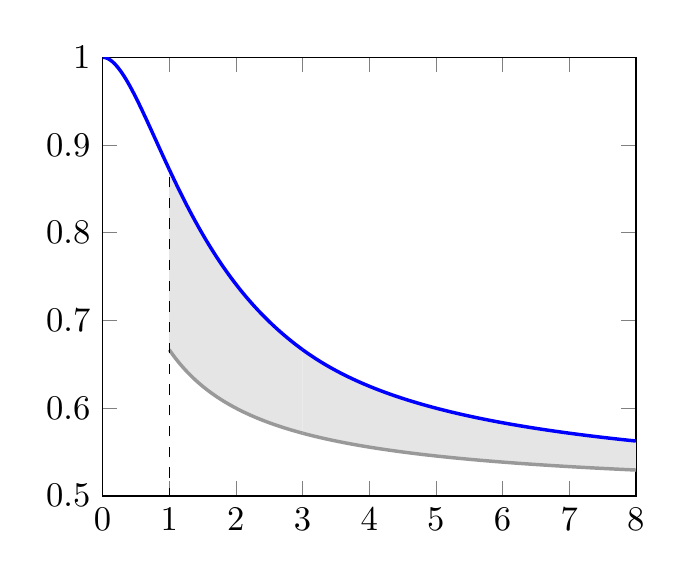}
\caption{Plot of the bounds on the function $f(x)$. The conjectured value is given in blue.}
\label{nbex}
\par\end{centering}
\end{figure}

This function satisfies the following bounds: \begin{equation}\label{func}\frac{x+1}{2x+1}\leq f(x) \leq \left\{
  \begin{array}{ccc}
    \frac{2x^2+3x+7+2\sqrt{x+1}}{4x^2+4x+9} & \text{for} & 0\leq x< 3,\\
    & & \\
    \frac{x+1}{2x} & \text{for} & x \geq 3,
  \end{array}
\right.\end{equation}
and the upper bound is known to be sharp for $x\in[0,1]$. A conjecture in~\cite{Lam} states that the upper bound is the correct value for all $x$. Note that, with this definition, $\rho(P_\infty)=f(1)$. The three results from \cite{Lam} that apply to infinite factors, one upper bound and two lower bounds, are as follows:

\begin{theo}[\cite{Lam}]\label{previous}
\begin{enumerate}[label=(\roman*)]
    \item\label{oldupp} For every non-empty finite graph $F$, we have \[\rho(\omega\cdot F)\leq f\left(\min\limits_{\substack{I\text{ indep. in }F\\I\neq\emptyset}}\frac{|N(I)|}{|I|}\right).\]
    \item\label{oldlow} Let $F$ be a graph, and let $I\subseteq V(F)$ be a non-empty independent set such that $N(I)$ is also independent. Then $\rho(\omega\cdot F)\geq f\left(\frac{|N(I)|}{|I|}\right)$.
    \item\label{oldbes} Let $F$ be a finite graph. Then $\rho(\omega\cdot F)\geq \frac{|V(F)|}{2|V(F)|-\alpha(F)}$.
\end{enumerate}
\end{theo}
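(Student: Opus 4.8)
The theorem bundles an upper bound, (i), and two lower bounds, (ii) and (iii); I would prove them separately, recovering the arguments of \cite{Lam}. \emph{For (i)} the task is to build an adversary colouring. Write $\gamma_0=\min_{I}\frac{|N(I)|}{|I|}$ over non-empty independent $I\subseteq V(F)$, fix a minimiser $I_0$, and take a $1$-Lipschitz $g$ with $g(0)=0$ nearly attaining $h(\gamma_0)$ in the definition of $f$. Partition $\N$ into consecutive blocks $B_1,B_2,\dots$ of geometrically growing size (so that only the last few blocks carry density in any prefix), assign to $B_j$ a ``height'' tracing the profile built from $g$, and colour every edge — between blocks, and within a block by recursing the same recipe — red or blue according to whether it ascends or descends in height; this is the rescaled analogue of the extremal colourings of \cite{CDLL} for $\rho(P_\infty)=f(1)$, tuned so that the ratio $|N(I_0)|:|I_0|=\gamma_0:1$ of ``expensive'' to ``cheap'' vertices in a copy of $F$ is the governing slope. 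The point is that in any monochromatic $\omega\cdot F$ the images of the $I_0$'s together with the images of their $F$-neighbourhoods must, block by block, obey a Lipschitz-type growth bound of rate $\gamma_0$, so covering $t$ vertices of the factor costs a prefix of length $(1/f(\gamma_0)-o(1))t$; letting $g$ vary returns the $h(\gamma_0)$ expression. \textbf{The main obstacle here is the passage from one growing structure to a disjoint union of finite copies}: one must show that having unboundedly many disjoint copies of $F$ on hand does not beat the single-structure bound, which requires an averaging argument over prefixes after a compactness reduction of the colouring.

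\emph{For (ii)} set $\gamma=|N(I)|/|I|$ and $K=V(F)\setminus(I\cup N(I))$. As $I$ and $N(I)$ are independent and no vertex of $K$ has a neighbour in $I$, $F$ is a spanning subgraph of the graph $F^\dagger$ in which $K$ is a clique, $K\cup I$ is complete to $N(I)$, and nothing else is an edge; so it suffices to build a monochromatic $\omega\cdot F^\dagger$ of upper density $f(\gamma)-o(1)$. Fix, by Ramsey, a monochromatic $\omega\cdot F[K]$ — say red — to carry the $K$-part of every copy; since all density of the final factor will come from the other two parts, this reserved portion may be arbitrarily sparse and costs nothing. It then remains to build, inside the rest of $\N$, infinitely many pairwise disjoint red ``cores'', each an $|I|$-set $X_i$ and an $|N(I)|$-set $Y_i$ with all edges between $X_i$ and $Y_i$ red (and all edges from $Y_i$ to the reserved $K$-tuple of copy $i$ red), of good upper density. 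This is precisely the extremal problem behind $\rho(P_\infty)\ge f(1)$, now with the two sides of a core weighted $|I|:|N(I)|=1:\gamma$; running the corresponding greedy/level algorithm — maintain a height, extend in red when possible, jump over obstructions, the Lipschitz budget bounding the density of jumped vertices — turns $f(1)$ into $f(\gamma)$. \textbf{The obstacle is keeping one global colour throughout}, re-running the path-lower-bound analysis with the new weights, and choosing the cores so that each $Y_i$ has an infinite red common neighbourhood, which is what lets it be attached to the reserved $K$-part.

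\emph{For (iii)} let $n=|V(F)|$, $\alpha=\alpha(F)$, fix a maximum independent set $I$, and set $J=V(F)\setminus I$, so $|J|=n-\alpha$ and, by maximality, $J\subseteq N(I)$. Every edge of $F$ lies inside $J$ or between $I$ and $J$, so $F$ is a spanning subgraph of the complete split graph $F^\ast=K_{n-\alpha}\vee\overline{K_\alpha}$, and it suffices to build a monochromatic $\omega\cdot F^\ast$ of upper density $\ge n/(2n-\alpha)-o(1)$ — a monochromatic $F^\ast$ in colour $c$ being a $c$-clique of size $n-\alpha$ plus $\alpha$ vertices in its common $c$-neighbourhood. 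I would produce the copies one at a time in a single colour, that colour and a renewable supply of clique-parts in it being supplied by a reduced-colouring/Ramsey analysis of the colouring on long prefixes (a naive infinite-Ramsey clique is too sparse to give density, so the clique-parts must be harvested from positive-density regions), and arranged so that the adversary can force at most $|J|=n-\alpha$ vertices to be permanently discarded per copy while $n$ are used — $n$ used per $2n-\alpha$ touched, hence upper density $\ge n/(2n-\alpha)$. \textbf{The obstacle is again this waste bound with a fixed colour}: showing no colouring forces more than $n-\alpha$ discards per copy, which is exactly where $J\subseteq N(I)$ (maximality of $I$) enters.
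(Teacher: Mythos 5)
This theorem is not proved in the paper at all: it is imported verbatim from \cite{Lam}, and the paper explicitly declares that the results from \cite{Lam} ``will be treated as black boxes.'' There is therefore no proof of the paper's own to compare your attempt against; the most I can do is check your reconstruction against what the paper says about \cite{Lam} and \cite{BurErdSpe}.

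The structural reductions you set up are correct and are indeed the right first step. In (ii), with $I$ and $N(I)$ both independent and $K=V(F)\setminus(I\cup N(I))$, the graph $F$ is a spanning subgraph of the $F^\dagger$ you describe, and in (iii) a maximum independent set $I$ forces $J=V(F)\setminus I\subseteq N(I)$, so $F$ is a spanning subgraph of the complete split graph $F^\ast=K_{n-\alpha}\vee\overline{K_\alpha}$. These are the same reductions the present paper exploits (the proof of Theorem~\ref{mainfac} also passes through $F^\ast$ via the $K_{r,s}$-components with $r=\alpha(F)$, $s=|V(F)|-\alpha(F)$).

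Where your proposal departs from what the sources actually do, and where I see genuine gaps, is in the density-building step. For (ii), the paper records that \cite{Lam} uses the coloring of Elekes, Soukup, Soukup and Szentmikl\'ossy to obtain ``connections at infinity''; your sketch of reserving an infinite red $K$-clique and then attaching cores with infinite red common neighbourhoods is pointing in that direction, but as written nothing guarantees the cores can be chosen with the required infinite common neighbourhood while also achieving density $f(\gamma)$ --- that is exactly what the ESSS machinery is for, and it should be named rather than hand-waved. For (iii), the paper is explicit that the bound ``is derived from a result of Burr, Erd\H{o}s and Spencer,'' i.e.\ from the Ramsey number $R(m\cdot F,m\cdot F)=(2|V(F)|-\alpha(F))m+O(1)$, followed by a transfer to the infinite setting of the same kind used to deduce Theorem~\ref{mainfac} from Lemma~\ref{maintwo}. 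Your alternative --- build one copy of $F^\ast$ at a time in a fixed colour, claiming ``at most $n-\alpha$ permanent discards per copy'' --- is not a proof and, more to the point, glosses over precisely the hard part. The claimed waste bound does not follow from $J\subseteq N(I)$; it is a global extremal statement, and the paper's own Section~3 (the bowtie lemma and the linear-programming-style case analysis needed even for $F=K_3$) is evidence that a naive greedy will not deliver it. I would recommend replacing your (iii) argument with the BES reduction the paper actually points to, and making the dependence on ESSS explicit in (ii).
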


For certain choices of $F$, these bounds are sufficient to find the exact value of the Ramsey density. Indeed, if one of the independent sets $I$ that minimize the ratio $\frac{|N(I)|}{|I|}$ also satisfies that $N(I)$ is independent, then \ref{oldupp} and \ref{oldlow} combined give that $\rho(\omega\cdot F)=\frac{|N(I)|}{|I|}$. This is the case, for example, for all bipartite graphs.

The bound \ref{oldbes} is derived from a result of Burr, Erd\H{o}s and Spencer \cite{BurErdSpe}, and does not depend on the function $f$. There is currently no known graph $F$ for which this bound is tight and, in fact, in this paper we will show that, unless the lower bound of \eqref{func} is tight for some $x$, the bound \ref{oldbes} is never tight.

Two interesting cases in which the exact value does not follow from these bounds are cliques and odd cycles. 

\begin{itemize}\item In the case of cliques, we have $\rho(\omega\cdot K_n)\leq f(n-1)\leq \frac{n}{2n-2}$ from \ref{oldupp} and $\rho(\omega\cdot K_n)\geq \frac{n}{2n-1}$ from \ref{oldbes}. We cannot obtain any lower bound from \ref{oldlow} because there is no non-empty independent set $I$ such that $N(I)$ is independent, if $n\geq 3.$
\item In the case of odd cycles, we have $\rho(\omega\cdot C_{2n+1})\leq f\left(\frac{n+1}{n}\right)$ from \ref{oldupp}, $f(\omega\cdot C_{2n+1})\geq f\left(\frac{n}{n-1}\right)$ from \ref{oldlow} (if $n\geq 2$), and $\rho(\omega\cdot C_{2n+1})\geq \frac{2n+1}{3n+2}$ from \ref{oldbes}. For $n$ large enough, the lower bound from \ref{oldlow} is better than that from \ref{oldbes}, although the latter has the advantage of being easier to compute.\end{itemize}

In this paper we will prove the following new lower bound for infinite factors:

\begin{theo}\label{mainfac} Let $F$ be a finite graph. Then $\rho(\omega\cdot F)\geq f\left(\frac{|V(F)|}{\alpha(F)}-1\right)$.\end{theo}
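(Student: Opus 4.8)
The plan is to reduce to a single convenient graph, then to rerun the argument behind Theorem~\ref{previous}\ref{oldlow} with one extra ingredient. Write $v=|V(F)|$, $\alpha=\alpha(F)$ and $\lambda_0=\frac{v}{\alpha}-1=\frac{v-\alpha}{\alpha}$, and fix a maximum independent set $I\subseteq V(F)$, so $|I|=\alpha$ and $R:=V(F)\setminus I$ has $|R|=v-\alpha$. Since $I$ is independent, every edge of $F$ meets $R$, so $F$ is a spanning subgraph of $F^\star:=K_{v-\alpha}\vee\overline{K_\alpha}$, the join of a clique (playing the role of $R$) with an independent set (playing the role of $I$); moreover $|V(F^\star)|=v$ and $\alpha(F^\star)=\alpha$. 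Any copy of $F^\star$ in $K_\N$ is, on the same vertex set, also a copy of $F$, so $\rho(\omega\cdot F)\ge\rho(\omega\cdot F^\star)$, and it suffices to prove $\rho(\omega\cdot F^\star)\ge f(\lambda_0)$. Now a monochromatic, say red, copy of $F^\star$ is precisely a pair of disjoint sets $(X,Y)$ with $|X|=v-\alpha$, $|Y|=\alpha$, such that $X$ spans a red clique and every pair between $X$ and $Y$ is red (there is no constraint inside $Y$). Hence a red copy of $\omega\cdot F^\star$ is a family of pairwise disjoint such pairs $(X_i,Y_i)$, of upper density $\bar d(\bigcup_i(X_i\cup Y_i))$. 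So, given $\epsilon>0$, the goal becomes: in every $2$-colouring of $K_\N$, find a colour $c$ and infinitely many pairwise disjoint pairs $(X_i,Y_i)$ with each $X_i$ a $c$-clique of size $v-\alpha$, each $Y_i$ of size $\alpha$, each $Y_i$ $c$-complete to $X_i$, and $\bar d(\bigcup_i(X_i\cup Y_i))\ge f(\lambda_0)-\epsilon$.

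Observe that if $N_{F^\star}(I)$ were independent we would be done by Theorem~\ref{previous}\ref{oldlow} applied to $F^\star$ with this $I$: the relevant ratio is $|N_{F^\star}(I)|/|I|=(v-\alpha)/\alpha=\lambda_0$, giving exactly $f(\lambda_0)$. The obstruction is that $N_{F^\star}(I)$ is the clique $K_{v-\alpha}$, not an independent set. So the theorem we want is the "$N(I)$ need not be independent'' version of \ref{oldlow} in the special case $N(I)=V(F)\setminus I$, and the plan is to rerun that proof. The construction behind \ref{oldlow} produces, for the optimal colour $c$ and a near-extremal choice governed by the function $h$ at $\gamma=\frac{\lambda-1}{\lambda+1}$, an ``unbalanced monochromatic blow-up'' arena: two sets $\mathcal X,\mathcal Y$, each partitioned into blocks of sizes $|N(I)|$ and $|I|$ respectively, with the $i$-th block of $\mathcal X$ being $c$-complete to the $i$-th block of $\mathcal Y$, and $\bar d(\mathcal X\cup\mathcal Y)$ close to $f(|N(I)|/|I|)$; the $\mathcal Y$-blocks then host copies of $I$ and the $\mathcal X$-blocks host copies of $N(I)$, which works precisely because $N(I)$ is independent and so may be placed on an arbitrary vertex set. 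For $F^\star$ we need the $\mathcal X$-blocks (of size $v-\alpha$) to additionally \emph{span $c$-cliques}, so that the fixed graph $F^\star[R]=K_{v-\alpha}$ embeds into each.

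The plan for that extra ingredient: since $v-\alpha$ is a fixed constant, finite Ramsey lets us pass to $c$-cliques, and the aim is to do so with no loss of density. Concretely, we will arrange the construction so that the region of $\N$ from which the $\mathcal X$-side is built is $c$-complete in the limit (its $c$-density tends to $1$), which lets us carve disjoint $c$-cliques of size $v-\alpha$ tiling that region up to a $o(1)$-fraction; alternatively, a cleaning/merging step replaces each would-be $\mathcal X$-block by a genuine $c$-clique of the same size, discarding only a $o(1)$-fraction of $\mathcal X$. Granting this, the greedy packing of the pairs $(X_i,Y_i)$ into copies of $F^\star$ realises the arena's upper density, which by the definition of $f$ through $h$ at $\gamma=\frac{\lambda_0-1}{\lambda_0+1}$ is at least $f(\lambda_0)-\epsilon$; letting $\epsilon\to0$ gives $\rho(\omega\cdot F^\star)\ge f(\lambda_0)$, hence Theorem~\ref{mainfac}. (The explicit triangle bound then follows by feeding $\lambda_0=2$, i.e.\ $\gamma=\tfrac13$, into $f$ and estimating $h(\tfrac13)$ from below, which is a separate one-variable computation on the definition of $h$.)

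The main obstacle is exactly the step in the previous paragraph: reproducing the density-optimal arena of \ref{oldlow} while forcing one designated side to be monochromatically complete in the correct colour. In \ref{oldlow} that side is allowed to be an arbitrary vertex set — that is the whole reason $N(I)$ must be independent there — so one must either re-run the optimisation with the ``$c$-rich $\mathcal X$-side'' requirement built in and check that it does not change the limiting quantity computed by $h$, or prove a standalone lemma that every $2$-colouring of $K_\N$ contains, in one colour, a structure of the stated density whose clique side is already monochromatic. The delicate point throughout is keeping the two colours coherent: the colour in which constant-size cliques are found must be the same as the colour carrying the biased bipartite structure. Once this is handled, the spanning-supergraph reduction to $F^\star$, the embedding criterion for $F^\star$, and the bookkeeping that the packing attains the full upper density are routine and follow the lines of \cite{Lam}.
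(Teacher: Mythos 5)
Your reduction to $F^\star = K_{v-\alpha}\vee\overline{K_\alpha}$ is sound and matches the structure the paper's proof implicitly targets. The gap is exactly where you flag it: you correctly identify the obstacle — the arena behind Theorem~\ref{previous}\ref{oldlow} gives no control over the colour of edges inside the $\mathcal{X}$-blocks — but neither of your two sketched fixes goes through. The claim that "the region of $\N$ from which the $\mathcal{X}$-side is built is $c$-complete in the limit" has no basis: the colouring is adversarial, and nothing in the \ref{oldlow} construction makes any block's internal $c$-density tend to $1$. A post hoc "cleaning/merging step" also fails: once the arena is fixed, the $\mathcal{X}$-blocks may simply contain almost no $c$-cliques of the required size, and discarding the offending blocks can cost a positive fraction of the density.

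The missing idea in the paper is to carry out the Ramsey-theoretic preprocessing \emph{before} the structure-finding step, not after. Lemma~\ref{refic} partitions $[n]$ into blocks, each assigned a colour $C_i$, so that almost every block is $(C_i,\epsilon,|V(F)|)$-adequate: every $\epsilon$-fraction subset of the block contains a $C_i$-coloured clique of size $|V(F)|$. This is proved by taking a maximal family of disjoint monochromatic copies of $K_{\alpha}$ with $\alpha=\lceil 2\epsilon^{-1}\rceil|V(F)|$, bounding the leftover via $R(K_\alpha,K_\alpha)$, and packing same-colour cliques into blocks. The colour $C_i$ is then encoded as a \emph{vertex} colour, and Lemma~\ref{blackbox2} (the structure lemma from~\cite{Lam}) is applied to a regularity-reduced auxiliary graph carrying both vertex and edge colours; it outputs a colour $C$ and a family of isolated $C$-vertices and $K_{r,s}$'s whose $s$-side is $C$-coloured, at density $f(s/r)-\epsilon$. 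Because those $s$-side blocks are $(C,\epsilon,|V(F)|)$-adequate \emph{by construction}, one greedily embeds $F$ with the $\alpha$ independent vertices on the $r$-side and the remaining $v-\alpha$ vertices into a $C$-clique found inside a common $C$-neighbourhood on the $s$-side. This pre-colouring trick is exactly what replaces the "$N(I)$ independent" hypothesis of \ref{oldlow} and keeps the two colours coherent automatically, which is the point your proposal leaves open. (Your final parenthetical is also incorrect: Theorem~\ref{triaex} is not obtained by lower-bounding $h(1/3)$; it is an independent argument in the style of Burr, Erd\H{o}s and Spencer.)
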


This bound is not weaker than Theorem \ref{previous}\ref{oldbes}. That result can be expressed as $\rho(\omega\cdot F)\geq\bar f\left(\frac{|V(F)|}{\alpha(F)}-1\right)$, for $\bar f(x)=\frac{x+1}{2x+1}$. By \eqref{func}, we have $f(x)\geq \bar f(x)$.

We can compare this lower bound to the upper bound in Theorem \ref{previous}\ref{oldupp} to find a new family of graphs for which we can obtain an exact result:

\begin{cor}\label{corclicyc} Let $F$ be a finite graph. Suppose that, among the independent sets $I$ that minimize $\frac{|N(I)|}{|I|}$, there is at least one with size $\alpha(F)$. Then $\rho(\omega\cdot F)= f\left(\frac{|V(F)|}{\alpha(F)}-1\right)$. In particular:
\begin{itemize}
    \item $\rho(\omega\cdot K_n)=f(n-1)$ for all $n\geq 1$.
    \item $\rho(\omega\cdot C_{2n+1})=f\left(\frac{n+1}{n}\right)$ for all $n\geq 1$.
\end{itemize}\end{cor}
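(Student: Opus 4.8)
The plan is to sandwich $\rho(\omega\cdot F)$ between the lower bound of Theorem~\ref{mainfac} and the upper bound of Theorem~\ref{previous}\ref{oldupp}, and to observe that under the stated hypothesis these two bounds coincide. The lower bound requires no work: Theorem~\ref{mainfac} gives $\rho(\omega\cdot F)\geq f\!\left(\frac{|V(F)|}{\alpha(F)}-1\right)$ directly. For the upper bound, fix an independent set $I_0$ with $|I_0|=\alpha(F)$ that minimizes $\frac{|N(I)|}{|I|}$, which exists by hypothesis. The one elementary fact needed is that a maximum independent set is a dominating set: every vertex $v\notin I_0$ has a neighbor in $I_0$, since otherwise $I_0\cup\{v\}$ would be a larger independent set. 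Hence $N(I_0)=V(F)\setminus I_0$, so $|N(I_0)|=|V(F)|-\alpha(F)$, and therefore the minimum appearing in Theorem~\ref{previous}\ref{oldupp} equals $\frac{|N(I_0)|}{|I_0|}=\frac{|V(F)|-\alpha(F)}{\alpha(F)}=\frac{|V(F)|}{\alpha(F)}-1$. Feeding this into Theorem~\ref{previous}\ref{oldupp} yields $\rho(\omega\cdot F)\leq f\!\left(\frac{|V(F)|}{\alpha(F)}-1\right)$, which matches the lower bound; equality follows.

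It then remains to check that $K_n$ and $C_{2n+1}$ satisfy the hypothesis. For $K_n$ this is immediate: $\alpha(K_n)=1$ and every nonempty independent set is a single vertex, so any minimizer of $\frac{|N(I)|}{|I|}$ has size $1=\alpha(K_n)$, and the general statement gives $\rho(\omega\cdot K_n)=f\!\left(\frac{n}{1}-1\right)=f(n-1)$. For $C_{2n+1}$, the plan is to show that $\frac{|N(I)|}{|I|}$ is minimized, over all nonempty independent sets, by a maximum independent set. Write $k=|I|$, so $1\leq k\leq n=\alpha(C_{2n+1})$; deleting $I$ from the cycle leaves $k$ arcs of lengths $g_1,\dots,g_k\geq 1$ with $\sum_i g_i=2n+1-k$, and one checks that $|N(I)|=\sum_i\min(g_i,2)$ (each arc of length $1$ contributes one vertex to $N(I)$, each longer arc contributes exactly its two endpoints). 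Since $k+\sum_i g_i=2n+1$ is odd, not all the $g_i$ can equal $1$, so at least one arc has length $\geq 2$; hence $|N(I)|\geq 2+(k-1)=k+1$ and $\frac{|N(I)|}{|I|}\geq 1+\frac1k\geq 1+\frac1n=\frac{n+1}{n}$, with equality exactly when $k=n$ (a maximum independent set, whose arcs are one of length $2$ and $n-1$ of length $1$). Thus the minimizer can be taken to be a maximum independent set, and $\rho(\omega\cdot C_{2n+1})=f\!\left(\frac{2n+1}{n}-1\right)=f\!\left(\frac{n+1}{n}\right)$.

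The proof is essentially bookkeeping, so I do not expect a genuine obstacle; the only two points carrying any content are the observation that a maximum independent set is dominating (which is what makes the minimum in Theorem~\ref{previous}\ref{oldupp} explicitly computable) and the short parity argument identifying maximum independent sets as the minimizers of $\frac{|N(I)|}{|I|}$ in odd cycles. It is worth verifying the small cases separately: for $n=1$ the clique formula gives $\rho(\omega\cdot K_1)=f(0)=1$ via the continuous extension of $f$, and $C_3=K_3$ gives $f(2)$, which agrees with $f\!\left(\frac{n+1}{n}\right)=f(2)$ and with the clique formula $f(3-1)$.
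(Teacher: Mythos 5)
Your proof is correct and takes exactly the approach the paper intends (the corollary is stated without an explicit proof, but the preceding sentence signals precisely this comparison of Theorem~\ref{mainfac} against Theorem~\ref{previous}\ref{oldupp}): the key observation that a maximum independent set is dominating, hence $N(I_0)=V(F)\setminus I_0$, makes the two bounds coincide. Your verifications for $K_n$ and for $C_{2n+1}$ (via the arc-length/parity computation showing the ratio $\frac{|N(I)|}{|I|}$ is minimized at $k=n$) are also correct.
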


Since a triangle is both a clique and an odd cycle, these results show that $\rho(\omega\cdot K_3)=f(2)$, answering a question from \cite{Lam}. Note however that, since we do not know the actual value of $f(2)$, the explicit bounds on $\rho(\omega\cdot K_3)$ have not improved: all we know is that $\frac35\leq \rho(\omega\cdot K_3)\leq (21+\sqrt{12})/33= 0.74133\dots$ (we suspect that the upper bound is sharp). The next result improves upon this lower bound:

\begin{theo}\label{triaex}$\rho(\omega\cdot K_3)\geq 1-\frac{1}{\sqrt{7}}\approx 0.62204$ (and therefore $f(2)\geq 1-\frac{1}{\sqrt{7}}$).\end{theo}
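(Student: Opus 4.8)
The plan is to reduce Theorem~\ref{triaex} to a finite, one-scale statement and then prove that statement by a triangle-packing argument optimised over a one-parameter family of strategies.

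\textbf{Reduction to one scale.} It is enough to show that for every $\varepsilon>0$ there is $t_0$ such that, for all $t\ge t_0$, every $2$-colouring of $K_t$ admits a monochromatic family of vertex-disjoint triangles covering at least $(1-1/\sqrt7-\varepsilon)t$ vertices. Given this, fix $\varepsilon_k\to0$ and a sufficiently fast-growing sequence $n_1<n_2<\cdots$ with $n_{k-1}/n_k\to0$, and let $\mathcal F_k$ be such a family in $[n_k]$ of density $\ge1-1/\sqrt7-\varepsilon_k$. Since there are only two colours, one of them — say red — is the colour of $\mathcal F_k$ for infinitely many $k$; restrict to this subsequence. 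In each window discard the at most $n_{k-1}$ triangles of $\mathcal F_k$ meeting $[n_{k-1}]$: the retained triangles lie in $(n_{k-1},n_k]$, so they are disjoint across windows, and their union is an infinite red triangle factor $H$ with $|V(H)\cap[n_k]|\ge(1-1/\sqrt7-\varepsilon_k)n_k-3n_{k-1}$, whence $\bar d(H)\ge1-1/\sqrt7$. This is routine bookkeeping.

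\textbf{The one-scale argument.} Let $G$ be the red graph on $[t]$ and $\overline G$ the blue graph, and suppose for contradiction that neither colour covers $\lambda t$ vertices with disjoint monochromatic triangles, where $\lambda=1-1/\sqrt7$. I would use two ingredients: (i) a graph on $m$ vertices with minimum degree at least $2m/3$ has $\lfloor m/3\rfloor$ pairwise disjoint triangles (Corr\'adi--Hajnal); and (ii) if a graph $H$ on $m$ vertices has $\alpha(H)\le2$ then it has disjoint triangles covering all but at most five vertices — a maximum triangle packing of $H$ leaves a triangle-free set with independence number $\le2$, which by $R(3,3)=6$ has at most five vertices. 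Ingredient (ii) converts the knowledge that red is triangle-free on a large set into a dense blue triangle factor there, and symmetrically. Now fix a threshold $\mu$. If the set $A$ of vertices of red-degree $\ge(1-\mu)t$ had $|A|\ge\lambda t$, then (for $\mu$ small compared with $1-\lambda$) every vertex of $A$ would have at least $|A|-\mu t\ge\frac23|A|$ red neighbours inside $A$, so by (i) red covers almost all of $A$ — done. Hence $A$ is small and $B=[t]\setminus A$ is large; but each vertex of $B$ has only $\ge\mu t$ blue neighbours, too weak on its own. The extra leverage should come from a maximum red triangle packing $\mathcal P$: the three vertices of any member of $\mathcal P$ are pairwise blue-nonadjacent, so a blue triangle meets each member of $\mathcal P$ in at most one vertex, and covering the vertex set of $\mathcal P$ by blue triangles amounts to finding many blue triangles that are ``rainbow'' across $\mathcal P$ — which I would analyse through the auxiliary $2$-coloured graph obtained by contracting each member of $\mathcal P$. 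Combining those blue triangles with the ones produced by (ii) on the red-triangle-free leftover $W=[t]\setminus V(\mathcal P)$, and optimising over $\mu$ and over the split of $[t]$ between $V(\mathcal P)$ and $W$, should yield a quadratic inequality whose critical point is $\lambda=1-1/\sqrt7$ (equivalently $7(1-\lambda)^2=1$).

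\textbf{Main obstacle and a fallback.} The hard part is this final balancing: simultaneously bounding how many disjoint blue triangles can be threaded through $V(\mathcal P)$ and how these cohere with the near-perfect blue factor on $W$, so that the two contributions sum to density $1-1/\sqrt7$ rather than to the $\tfrac12$ that one gets from the crude argument (if neither colour covers $\lambda t$, then off a set of size at most $5$ both colours are triangle-free, so the union of the two triangle transversals has size at least $t-5$, forcing $\lambda>\tfrac12-o(1)$). If the single scale proves insufficient, the alternative is to carry the analysis over a rapidly increasing sequence of windows and to track the density of already-covered vertices from one window to the next — the multi-scale mechanism behind Theorem~\ref{previous} — from which the same constant would re-emerge through a quadratic optimisation.
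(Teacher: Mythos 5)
There is a genuine gap, and in fact two distinct problems.

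First, the reduction to a single scale overreaches. You propose to show that for large $t$ \emph{every} $2$-colouring of $K_t$ has a monochromatic disjoint triangle family covering $\ge(1-1/\sqrt7-\varepsilon)t$ vertices, and then to stitch windows together. But the paper's finite lemma deliberately only guarantees such a density at one of \emph{two} times, $k\in\{\lfloor\delta n\rfloor,n\}$ with $\delta=(4\sqrt7+2)/27$; this two-choice structure is essential because the whole argument plays two nested windows $[\delta n]$ and $(\delta n,n]$ against each other. Your one-scale statement is strictly stronger, it is not what the paper proves, and there is no reason offered that it is even true — if it fails for some colouring, the stitching collapses. (Your fallback paragraph acknowledges you might need a multi-scale version, but then you have no argument at all for what the windows should carry.)

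Second, the ``one-scale argument'' is not a proof but a sketch of a direction that never reaches the target constant. You invoke Corr\'adi--Hajnal and the $R(3,3)=6$ fact, introduce a threshold $\mu$ and a split $A\cup B$, gesture at contracting a maximum red triangle packing $\mathcal P$ and threading ``rainbow'' blue triangles across it, and then write that this ``should yield a quadratic inequality whose critical point is $\lambda=1-1/\sqrt7$'' — but the inequality is never derived, the optimisation is never done, and you yourself flag the balancing step as the ``main obstacle.'' The crude argument you mention gives only $1/2$, so the entire content of the theorem is exactly the part left unproved. Crucially, the object that actually drives the paper's proof is the \emph{bowtie} (a red and a blue triangle sharing a vertex), with two dedicated lemmas: one showing $3|F|+2|F'|\ge n-5$ where $F$ is the largest monochromatic triangle family and $F'$ the largest family of disjoint bowties, and one case analysis on two disjoint bowties in $K_{10}$ producing either two same-colour triangles on four vertices of one bowtie, or two disjoint triangles of each colour on nine vertices. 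Pairing bowties across the two windows and feeding these two lemmas into an explicit system of linear inequalities is what produces $\gamma=1-1/\sqrt7$ and $\delta=(4\sqrt7+2)/27$. None of this machinery appears in your proposal, and there is no substitute mechanism that would generate the same constant. As written, the proposal does not establish the theorem.
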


Unlike Theorem \ref{mainfac}, whose proof is based on the techniques from \cite{CDLL} and \cite{Lam} (which is where the function $f$ comes from), Theorem \ref{triaex} is based on a careful analysis of the technique of Burr, Erd\H{o}s and Spencer \cite{BurErdSpe}.

\section{Proof of Theorem \ref{mainfac}}

The proof of Lemma \ref{mainfac} uses the same overarching outline as Theorem \ref{previous}\ref{oldlow}: given an edge coloring of $K_\N$ we color the vertices of the graph while encoding some information, we find a smaller substructure in the resulting coloring and finally use the information in the vertices to extend this substructure into our desired graph.

The main difference comes from the choice of coloring. While the proof from \cite{Lam} uses a coloring due to Elekes, Soukup, Soukup and Szentmikl\'ossy~\cite{ESSS}, here instead we will use a coloring that arises from a straightforward application of Ramsey's theorem. The reason for this change is that the coloring from \cite{ESSS} is good at finding ``connections at infinity", allowing one to add infinitely many vertices to a component with little restriction as to how they are joined. Because the components of $\omega\cdot F$ all have bounded size, we do not need to worry about connecting more and more vertices into a component. Instead, we will be able to create each component of $\omega\cdot F$ in one or two steps.

The intermediate step of finding a smaller substructure is essentially the same. We will use the following lemma:

\begin{lemma}\label{blackbox2} Let $r$ and $s$ be positive integers. For every $\epsilon>0$ there exist $\tau>0$ and $N$ with the following property: for every graph $G$ on vertex set $[n]$, with $n>N$, such that $\delta(G)>(1-\tau)n$, in every coloring $\Psi:V(G)\cup E(G)\rightarrow\{R,B\}$ there exists a subgraph $F\subseteq G$, a color $C\in\{R,B\}$ and a value $k\in[\delta n,n]$ such that $|V(F)\cap[k]|\geq (f(s/r)-\epsilon)k$ and every component of $F$ is of one of the following types:\begin{itemize}
\item An isolated vertex of color $C$.
\item A copy of $K_{r,s}$ in which the edges and the side of size $s$ have color $C$, and the side of size $r$ has the opposite color.
\end{itemize}
\end{lemma}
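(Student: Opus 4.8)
Lemma \ref{blackbox2} is the finite core of the $F=K_{r,s}$ case of Theorem \ref{previous}\ref{oldlow} (with $I$ the part of size $r$), so the plan is to follow the template of \cite{CDLL} and \cite{Lam}, adapted to $K_{r,s}$-components in place of connected matchings. First I would reduce to $G=K_n$: every component of the structure $F$ we want to build has at most $r+s$ vertices and, until one colour class of vertices is nearly used up, we keep $\Omega(n)$ legal choices of each colour at every step, so the fewer than $\tau n$ non-neighbours of any vertex are harmless and cost only $o(n)$ in coverage. Choosing $\tau$ small enough, assume from now on that $G=K_n$, with vertex- and edge-colouring $\Psi$.

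Next I would isolate the one-dimensional data that drives the bound. For a colour $C$ and $t\in[n]$, let $a_C(t)$ be the number of $C$-coloured vertices in $[t]$; each $a_C$ is $1$-Lipschitz and $a_R(t)+a_B(t)=t$. For a colour $C$ and threshold $k$, the densest admissible $F$ of colour $C$ on $[k]$ keeps every $C$-vertex of $[k]$ (each may be left as an isolated $C$-component) and covers as many $\bar C$-vertices of $[k]$ as can be packed into pairwise disjoint copies of $K_{r,s}$ whose $s$-side consists of $C$-vertices, whose $r$-side consists of $\bar C$-vertices, and all of whose edges are $C$-coloured. Writing $m_C(k)$ for that maximum, the goal is to exhibit $C$ and $k$ with $a_C(k)+m_C(k)\ge(f(s/r)-\epsilon)k$. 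The edge colouring enters only through $m_C(k)$, and there a greedy/supersaturation argument applies: wherever, between the $R$- and $B$-vertices, one colour's density is large, one can peel off copies of $K_{r,s}$ of the prescribed type --- drawing $s$-side vertices from all of $[n]$ --- until a vertex class is nearly exhausted; after a regularity-type reduction of the edge colouring to a coarser ``majority'' colouring (at a cost of $o(n)$), $m_C(k)$ becomes a function of the vertex-colour profile $t\mapsto a_R(t)$ and a bounded amount of edge data. What is left is a one-dimensional optimisation.

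The remaining and main task is to pin down the exact constant $f(s/r)$ in that optimisation. As in \cite{CDLL}, I would attach to $\Psi$ a $1$-Lipschitz function $g$ with $g(0)=0$ --- built from the vertex-colour discrepancies of $\Psi$ together with the cumulative $R/B$ edge-colour bias, normalised and passed to a limit as $n\to\infty$ --- and prove that the builder can choose a colour $C$ and threshold $k$ for which $|V(F)\cap[k]|/k$ equals, up to $o(1)$, the value obtained from the defining formula of $f(s/r)$ by replacing $h(\gamma)$ with $\frac{\Gamma^+_\gamma(g,t_k)+\Gamma^-_\gamma(g,t_k)}{t_k}$, where $\lambda=s/r$, $\gamma=\frac{\lambda-1}{\lambda+1}$, and $t_k$ is a point determined by $k$. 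Since maximising over $k$ corresponds to the $\limsup_t$ in the definition of $h$, and $h(\gamma)$ is by definition the infimum of $\limsup_t\frac{\Gamma^+_\gamma(g,t)+\Gamma^-_\gamma(g,t)}{t}$ over all such $g$, the coverage is at least $f(s/r)-\epsilon$, as required. The hard part is establishing this identification --- that the packing analysis of \cite{CDLL}, with ``monochromatic connected matching'' replaced by ``pairwise disjoint $C$-coloured copies of $K_{r,s}$ with $\bar C$ on the $r$-side, plus isolated $C$-vertices'', still produces precisely the function $f$ --- and I expect it to be the only genuinely delicate point. Working in its favour: since every component of $F$ has bounded size, the portions of \cite{CDLL} devoted to routing long paths and forming connections ``at infinity'' are unnecessary, so the adaptation should be shorter than the original, and the one honestly new ingredient is the supersaturation step above.
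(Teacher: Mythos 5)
The paper's own ``proof'' of this lemma is a single sentence: it observes that this is a finitary version of Lemma~2 of~\cite{Lam} and asserts that the same proof works. Your sketch is consistent with the machinery that citation refers to (the $R/B$ vertex-density profile $a_C$, the $1$-Lipschitz discrepancy function $g$, the $\Gamma^\pm_\gamma$ and $h(\gamma)$ bookkeeping, and the $K_{r,s}$-packing via supersaturation), so the two take essentially the same approach; but since the paper spells out nothing, the delicate identification you yourself flag --- that the packing optimum really equals $f(s/r)$ up to $\epsilon$ --- is exactly the part neither of you writes out here, and is what the citation to Lemma~2 of~\cite{Lam} is standing in for.
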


This is a finitary version of Lemma 2 in \cite{Lam}. The same proof that was used in that paper works here.

The next lemma will require the following definition, which will be the key information that will be encoded into our vertex coloring:

\begin{defi} Given a set $X$ of vertices in an edge-colored graph, a real number $\epsilon>0$, a natural number $s$ and a color $C\in\{R,B\}$, we say that $X$ is $(C,\epsilon, s)$-adequate if every subset of $X$ of size at least $\epsilon|X|$ contains a $C$-colored clique of size $s$. \end{defi}

Observe that in particular, if a set $X$ is $(C,\epsilon, s)$-adequate, then it contains a disjoint family of $C$-colored copies of $K_s$ covering at least $(1-\epsilon)|X|$ vertices. This is proved by considering a maximal family of such cliques.

\begin{lemma}\label{refic} For every $\epsilon>0$ and $s\in\N$ there exist $T,N\in\N$ with the following property: for every $\Psi:E(K_n)\rightarrow \{R,B\}$ with $n>N$ there exists a partition $V(K_n)=V_1\cup V_2\cup\dots\cup V_T$ into almost equal parts, and colors $C_1, \dots, C_T\in \{R,B\}$ in which all but $\epsilon T$ sets $V_i$ are $(C_i,\epsilon, s)$-adequate.\end{lemma}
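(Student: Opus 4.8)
The plan is to reduce the statement to a finite Ramsey-type fact by an iterated extraction argument. Let $M=M(s)$ be the two-colour Ramsey number of $K_s$, so that any $M$ vertices of a $2$-edge-coloured $K_n$ span a monochromatic $K_s$. For a vertex set $X$ and a colour $C\in\{R,B\}$, write $\beta_C(X)$ for the maximum size of a subset of $X$ spanning no $C$-coloured $K_s$; then $X$ is $(C,\epsilon,s)$-adequate precisely when $\beta_C(X)<\epsilon|X|$. I will use two trivial properties of $\beta_C$: monotonicity ($Y\subseteq X$ implies $\beta_C(Y)\le\beta_C(X)$), and subadditivity ($\beta_C(X_1\cup\dots\cup X_k)\le\beta_C(X_1)+\dots+\beta_C(X_k)$), the latter giving that a union of $(C,\delta,s)$-adequate sets is again $(C,\delta,s)$-adequate.

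The engine is the claim that, for every $\delta\in(0,1]$, every vertex set $U$ with $|U|\ge M/\delta^2$ contains, for some colour $C$, a $(C,\delta,s)$-adequate subset $A$ with $|A|\ge\delta|U|$. To prove it: if $U$ is itself $(R,\delta,s)$-adequate, take $A=U$; otherwise some $A_1\subseteq U$ with $|A_1|\ge\delta|U|$ spans no red $K_s$, and if $A_1$ is $(B,\delta,s)$-adequate, take $A=A_1$, while if it is not, then some $A_2\subseteq A_1$ with $|A_2|\ge\delta|A_1|\ge\delta^2|U|\ge M$ spans no blue $K_s$ — but $A_2$ then spans no monochromatic $K_s$ at all, contradicting the definition of $M$.

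Given the claim, I would fix $T=\lceil 2/\epsilon\rceil$ and $\delta=\epsilon/(4T)$ and iterate: from $U_0=[n]$, as long as $|U_i|\ge M/\delta^2$ apply the claim to $U_i$ to extract an adequate $A_{i+1}\subseteq U_i$ and put $U_{i+1}=U_i\setminus A_{i+1}$. Since $|U_{i+1}|\le(1-\delta)|U_i|$, this halts at some $U_\ell$ with $|U_\ell|<M/\delta^2=:D$, a constant depending only on $\epsilon$ and $s$. Let $S_R$ be the union of the red-adequate $A_i$ and $S_B$ the union of the blue-adequate ones; by subadditivity $S_R$ is $(R,\delta,s)$-adequate, $S_B$ is $(B,\delta,s)$-adequate, and $S_R\sqcup S_B\sqcup U_\ell=[n]$. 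Finally I would list the vertices with those of $S_R$ first, then those of $S_B$, then those of $U_\ell$, and cut this list into $T$ consecutive blocks $V_1,\dots,V_T$, each of size $\lfloor n/T\rfloor$ or $\lceil n/T\rceil$. For $n$ above a suitable constant we have $D<\lfloor n/T\rfloor$, so no block fits inside $U_\ell$; hence at most two blocks meet two of the sets $S_R,S_B,U_\ell$ (those containing one of the two interface points), while every other block lies entirely inside $S_R$ or entirely inside $S_B$. Such a block $V_i\subseteq S_C$ has $|V_i|\ge\lfloor n/T\rfloor\ge n/(2T)$ and $\beta_C(V_i)\le\beta_C(S_C)<\delta n\le\epsilon|V_i|$ (using $\delta\le\epsilon/(2T)$), so it is $(C,\epsilon,s)$-adequate. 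Choosing $C_i$ arbitrarily for the at most two remaining blocks leaves at most $2\le\epsilon T$ non-adequate blocks, as required; one then takes $N$ large enough to validate the finitely many ``$n$ large'' inequalities used above.

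The step I expect to be the main obstacle is this last cutting. One cannot extract a single large adequate set and simply chop it into $T$ pieces, because a small subset of a $(C,\delta,s)$-adequate set need not be adequate with any useful parameter — the ``bad'' $C$-$K_s$-free subset witnessing non-adequacy could lie entirely inside one piece. The device that rescues the argument is to drive $\delta$ down to order $\epsilon/T$ \emph{before} cutting: since $\beta_C$ is only additive across a partition, not multiplicative, a block of size $\Theta(n/T)$ contained in a set $S_C$ with $\beta_C(S_C)<\delta n$ automatically has its $\beta_C$ below $\epsilon$ times its own size. Carrying this bound through, together with the routine bookkeeping that produces exactly $T$ almost-equal blocks and isolates the at most two ``impure'' ones, is where the minor technical care lies; the genuinely Ramsey-theoretic content is confined to the short claim above.
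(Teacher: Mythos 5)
Your proof is correct, and while the overall strategy (iteratively extract a Ramsey structure until only a constant remainder is left, then redistribute into $T$ parts) mirrors the paper's, the key technical devices differ. The paper takes $\alpha=\lceil 2\epsilon^{-1}\rceil s$, extracts a maximal family $\mathcal{F}$ of vertex-disjoint monochromatic $K_\alpha$'s (with leftovers bounded by $R(K_\alpha,K_\alpha)$), and builds each part $V_i$ by packing $\beta=\lfloor\lfloor n/T\rfloor/\alpha\rfloor$ same-coloured cliques from $\mathcal{F}$ into it; adequacy of $V_i$ then follows from pigeonhole, since any subset of relative size $\epsilon$ must place at least $s$ vertices into a single one of those cliques. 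You instead work with the intrinsic quantity $\beta_C(X)$, the maximum size of a $C$-$K_s$-free subset, and lean on its monotonicity and subadditivity: iterating a short Ramsey claim (which only needs $R(s,s)$) you peel off $(C,\delta,s)$-adequate chunks of relative size at least $\delta$ until a constant remains, union them by colour into $S_R,S_B$, and slice the resulting ordered list into $T$ blocks. The decisive device, which you correctly flag as the main obstacle, is to push $\delta$ down to $\Theta(\epsilon/T)$ \emph{before} slicing, so that the absolute bound $\beta_C(S_C)<\delta n$ descends via monotonicity into the required relative bound $<\epsilon|V_i|$ on each block of size $\Theta(n/T)$; the paper keeps the adequacy parameter at $\epsilon$ throughout and gets the relative bound directly because it controls the clique structure inside each $V_i$. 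Both arguments are elementary and give constants of the same flavour; yours is a bit more modular (the subadditivity of $\beta_C$ is a clean, reusable lemma and the contradiction in your extraction claim is crisp), the paper's a bit more explicit about what sits inside each part.
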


\begin{proof} Let $\alpha=\lceil2\epsilon^{-1}\rceil s$ and $q=R(K_\alpha, K_\alpha)$. Take a maximal family $\mathcal{F}$ of disjoint monochromatic copies of $K_\alpha$ ($\mathcal{F}$ might contain cliques of different colors). The vertex set $V(K_n)\setminus V(\mathcal{F})$ does not contain any monochromatic $K_\alpha$, so its size is at most $q$. 

Let $T=\lceil 3\epsilon^{-1}\rceil$. Each of the sets $V_1, \dots, V_T$ will have size either $\lfloor n/T\rfloor$ or $\lceil n/T\rceil$. Let $\beta=\lfloor \lfloor n/T\rfloor/\alpha\rfloor$, this is the number of $\alpha$-cliques that fit into a set $V_i$ (of the smaller type). Let $\mathcal{F}_R$ and $\mathcal{F}_B$ be the sets of red and blue cliques from $\mathcal{F}$, respectively.

Create $V_1, V_2, \dots, V_T$, all initially empty. For $\lfloor |\mathcal{F}_R|/\beta\rfloor+\lfloor |\mathcal{F}_B|/\beta\rfloor$ of these sets, put $\beta$ cliques from $\mathcal{F}$ of the same color into each. We call these sets pseudo-adequate, and we will later show that they are indeed adequate. Distribute the remaining vertices from $K_n$ into the sets $V_i$ so that the resulting sets are almost equal. The number of pseudo-adequate sets is \[\left\lfloor \frac{|\mathcal{F}_R|}\beta\right\rfloor+\left\lfloor \frac{|\mathcal{F}_B|}\beta\right\rfloor\geq\left\lfloor\frac{|\mathcal{F}|}{\beta}\right\rfloor-1\geq \frac{\mathcal{F}}{\beta}-2\geq \frac{\frac{n-q}{\alpha}}{\beta}-2\geq T-\frac{q}{\alpha\beta}-2\geq T-3\] if $\beta\geq q$ (this will be the case if $n>q\alpha T$). That means that there are at most three sets which are not pseudo-adequate, which represents less than $\epsilon T$ of the sets.

For each pseudo-adequate set $V_i$, we let $C_i$ be the color of the cliques from $\mathcal{F}$ that were put into it. For the (up to three) remaining sets $V_i$, choose $C_i$ arbitrarily. If $V_i$ is pseudo-adequate, there are at most $\alpha$ vertices not in a copy of $K_\alpha$ from $\mathcal{F}$. 

In a pseudo-adequate set $V_i$, take a subset $S\subseteq V_i$ of size at least $\epsilon|V_i|$. There are at least $\epsilon|V_i|-\alpha$ vertices in $S$ which belong to a clique in $\mathcal{F}$, so by the pigeonhole principle $S$ contains at least $\lceil\frac{\epsilon|V_i|-\alpha}{\beta}\rceil$ elements from some clique in $\mathcal{F}$. This is at least \[\frac{\epsilon|V_i|-\alpha}{\beta}\geq\frac{\epsilon\lfloor\frac{n}{T}\rfloor-\alpha}{\left\lfloor\frac{\left\lfloor\frac nT\right\rfloor}{\alpha}\right\rfloor}\geq \frac{\frac\epsilon2\lfloor\frac nT\rfloor}{\frac{\left\lfloor\frac nT\right\rfloor}\alpha}=\frac{\alpha\epsilon}{2}\geq s\] if $\alpha<\frac{\epsilon}{2}\left\lfloor\frac nT\right\rfloor$. This holds if $n>\lceil2\epsilon^{-1}\alpha\rceil T$. We conclude that we can take $N=\max\{q\alpha T,  \lceil2\epsilon^{-1}\alpha\rceil T\}$.\end{proof}

We will prove a finitary version of Theorem~\ref{mainfac}, from which the infinite version follows directly:

\begin{lemma}\label{maintwo} For every finite graph $F$ and every $\epsilon>0$ there exists $N$ and $\tau>0$ with the following property: for every $n>N$, for every coloring $\Psi:E(K_n)\rightarrow \{R,B\}$ there exists $t\in[\tau n,n]$ and a monochromatic family $\mathcal{F}$ of disjoint copies of $F$ such that \[\frac{|V(\mathcal{F})\cap[t]|}{t}\geq f\left(\frac{|V(F)|}{\alpha(F)}-1\right)-\epsilon.\]\end{lemma}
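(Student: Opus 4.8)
The plan is to follow the three‑phase strategy behind Theorem~\ref{previous}\ref{oldlow} — colour the vertices of the host to record useful information, extract a smaller almost‑monochromatic substructure, then expand it into the desired factor — the difference being that where the proof of~\ref{oldlow} only needs a monochromatic complete bipartite graph (there $N(I)$ is assumed independent), here we additionally need the $N(I)$‑side to carry a monochromatic clique, and supplying such cliques densely is exactly what Lemma~\ref{refic} does. Write $\alpha:=\alpha(F)$, fix a maximum (hence maximal) independent set $I\subseteq V(F)$, and put $J:=V(F)\setminus I$, $r:=\alpha$, $s:=|V(F)|-\alpha=|J|$. Maximality gives $J=N_F(I)$, so $F$ embeds into the join $K_s\vee\overline{K_r}\subseteq K_{|V(F)|}$; also $s/r=|V(F)|/\alpha-1$, so $f(s/r)$ is the target density. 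Given $\Psi\colon E(K_n)\to\{R,B\}$, apply Lemma~\ref{refic} with parameter $|V(F)|$ in place of $s$, and with a small constant $\epsilon_1$ (fixed last, small in terms of $\epsilon$ and $F$), getting a partition $[n]=V_1\cup\cdots\cup V_T$ into almost‑equal parts and colours $C_1,\dots,C_T$ with all but $\epsilon_1 T$ of the $V_i$ being $(C_i,\epsilon_1,|V(F)|)$‑adequate; in particular each such $V_i$, and each subset of it of relative size at least $\epsilon_1$, contains a $C_i$‑coloured copy of $K_{|V(F)|}$, hence of $F$.

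Next, encode the partition into an edge‑and‑vertex colouring of an auxiliary graph $G$ and apply Lemma~\ref{blackbox2}: vertices of $G$ carry the adequacy colour of the part containing them, edges carry colour information derived from $\Psi$, and the few non‑adequate parts are discarded so that $\delta(G)$ stays above $(1-\tau)|V(G)|$; the order on $V(G)$ is kept compatible with the natural order on $[n]$, and arranging that the parts are ordered blocks so that "$\cap[k]$" keeps naming initial segments is routine bookkeeping, carried out as in~\cite{CDLL,Lam}. Running Lemma~\ref{blackbox2} with the $r,s$ above and with $\epsilon/3$ (with $\epsilon_1$ small enough that $T$ beats the threshold it needs) yields a colour $C$, a scale $k\in[\delta n,n]$, and $F^\ast\subseteq G$ with $|V(F^\ast)\cap[k]|\ge(f(s/r)-\epsilon/3)k$, every component of which is either an isolated vertex of colour $C$, or a copy of $K_{r,s}$ whose $sr$ cross‑edges and whose $s$‑side have colour $C$ and whose $r$‑side has the opposite colour.

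Now build a colour‑$C$ family $\mathcal F$ of vertex‑disjoint copies of $F$ whose vertex set contains almost all of $V(F^\ast)\cap[k]$. An isolated $C$‑vertex lies in a $C$‑adequate part; inside each such part we repeatedly pull $C$‑coloured copies of $K_{|V(F)|}\supseteq F$ out of the part's isolated‑$C$ skeleton vertices lying in $[k]$, stopping once fewer than $\epsilon_1|V_i|$ remain, which covers all but an $O(\epsilon_1)$‑fraction of them (the "one‑step" case). Each $K_{r,s}$ component is realised in "two steps": its $s$‑side, sitting in $C$‑adequate parts, supplies the clique playing the part of $J=N_F(I)$ — a $C$‑coloured $K_s=K_{|V(F)|-\alpha}$ pulled from $[k]$ by adequacy — and its $r$‑side supplies the independent set $I$ of size $\alpha=r$, the two being joined along $C$‑coloured edges. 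The accounting closes because $s=|V(F)|-\alpha$: each $s$‑side skeleton vertex pays for exactly one vertex of a $J$‑clique (a $1/s$‑fraction of a clique on $s$ vertices), so the number of vertices spent inside any one part is at most its size, up to the $\epsilon_1$‑slack, and similarly on the $r$‑side. Summing over components and using $k\ge\delta n$ to keep the accumulated losses below $\epsilon/3$ gives $|V(\mathcal F)\cap[k]|\ge(f(s/r)-\epsilon)k$; taking $t:=k$ proves Lemma~\ref{maintwo}, and Theorem~\ref{mainfac} follows by a standard compactness argument.

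The delicate point is the two‑step realisation of the $K_{r,s}$ components: Lemma~\ref{blackbox2} only certifies that certain \emph{skeleton} vertices are pairwise $C$‑joined, whereas the copies of $F$ we actually place use \emph{fresh} $C$‑cliques conjured by adequacy, and a priori an $r$‑side skeleton vertex need not be $C$‑adjacent to a clique found near its $s$‑side partner. Closing this gap is what governs how much must be built into the colouring of $G$ — and, if that is not enough, how the partition must be refined into boundedly many sub‑parts on which the relevant bipartite graphs are pseudorandom enough (say by dependent random choice) — so that a $C$‑coloured $G$‑edge really does certify a linear reservoir of candidate $I$‑vertices $C$‑adjacent to arbitrary bounded $C$‑cliques on the other side; the assignment of $I$‑vertices to $J$‑cliques is then finished by a greedy or defect‑Hall matching that keeps re‑invoking adequacy so that enough good cliques and enough well‑connected vertices remain at every step. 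Carrying this through without destroying adequacy, while keeping $\epsilon_1$, $\tau$ and $\epsilon$ in balance, is where essentially all the work lies; Lemma~\ref{refic}, Lemma~\ref{blackbox2}, and the finite‑to‑infinite reduction are supplied above or standard.
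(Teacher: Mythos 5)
Your high‑level skeleton (Lemma~\ref{refic} to record adequacy, then Lemma~\ref{blackbox2}, then a cleanup step that converts isolated vertices and $K_{r,s}$‑components into actual $F$‑copies) matches the paper, and you correctly identify $r=\alpha(F)$, $s=|V(F)|-\alpha(F)$ and the two‑step realisation of $K_{r,s}$ components. But you flag the crux yourself and then leave it open: ``Carrying this through \ldots is where essentially all the work lies.'' That is a genuine gap, not bookkeeping. Concretely, once Lemma~\ref{blackbox2} hands you a $K_{r,s}$ component in the auxiliary graph, nothing in your setup guarantees that in the host graph you can place $r$ vertices on the $r$‑side and an $s$‑clique on the $s$‑side with all $rs$ cross‑edges in colour $C$; a colour‑$C$ auxiliary edge defined only by ``information derived from $\Psi$'' does not supply the pseudorandomness needed to close this. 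Worse, your order of operations makes the fix awkward: you apply Lemma~\ref{refic} to all of $[n]$ first and only afterwards contemplate refining into pseudorandom sub‑parts, but refining a $(C,\epsilon_1,s')$‑adequate part into $q$ pieces only gives $(C,\epsilon_1 q,s')$‑adequacy, which is vacuous once $q>1/\epsilon_1$ — so the very refinement you invoke to restore pseudorandomness would destroy the adequacy you started with.

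The paper resolves this by inverting the order: first apply the coloured Szemer\'edi regularity lemma to obtain $\{X_0,\dots,X_m\}$ refining the interval partition, \emph{then} apply Lemma~\ref{refic} inside each $X_i$ to split it into $T$ pieces. Because regularity of $(X_{i_1},X_{i_2})$ is inherited (at quality $T\gamma$) by all sub‑pairs $(X^{i_1}_{j_1},X^{i_2}_{j_2})$, regularity and adequacy then coexist, and the auxiliary edge colour is the majority colour of a regular pair, which carries real meaning. The cleanup is then not a matching argument as you propose (``greedy or defect‑Hall''); it is a maximality argument. Take an inclusion‑maximal constrained family $\mathcal F$. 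If some $K_{r,s}$ component had coverage below $1-\xi$, there are uncovered subsets $W\subseteq Y_{v_1}$, $W'\subseteq Y_{v'_1}$ of relative size $\ge\xi$ on the two sides; regularity of $(Y_{v_1},Y_{v'_1})$ (density $\ge 1/2$ in $C$) gives density $\ge 1/4$ in $(W,W')$; greedily choose $w_1,\dots,w_r\in W$ to keep a common $C$‑neighbourhood $Z\subseteq W'$ of size $\ge 4^{-r}|W'|\ge\xi 4^{-r}|Y_{v'_1}|$; provided $\xi 4^{-r}>\kappa$, adequacy of $Y_{v'_1}$ produces a $C$‑coloured $K_{|V(F)|}$ inside $Z$, and $s$ of its vertices together with $w_1,\dots,w_r$ yield a fresh copy of $F$ disjoint from $\mathcal F$ — a contradiction. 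This is exactly the ``linear reservoir'' you describe needing, but it is obtained from regularity and maximality, not from dependent random choice or a Hall‑type argument, and it only works because regularity was set up \emph{before} adequacy.
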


\begin{proof}[Proof of Theorem \ref{mainfac}] Fix $\epsilon>0$. Let $\Psi:E(K_\N)\rightarrow\{R,B\}$ be an edge-coloring. Consider a sequence $n_1<n_2< \dots$ of natural numbers with $\lim n_{i+1}/n_i\rightarrow\infty$. For each $i\geq 2$, consider the colored clique $G_i$ on vertex set $(n_{i-1},n_i]$, whose edge-coloring is $\Psi_i$, the one induced by $\Psi$. Now consider the graph $G_i'$, obtained by substracting $n_{i-1}$ from the label on each vertex.

By Lemma \ref{maintwo}, there exists a fixed $\tau>0$ such that, for every $i$ large enough (enough to have $n_i-n_{i-1}>N$) there is a value $t_i\in[\tau (n_i-n_{i-1}),n_i-n_{i-1}]$, and a monochromatic family $\mathcal{F}'_i$ of disjoint copies of $F$ in $G'_i$ with color $C_i$, such that \[\frac{|V(\mathcal{F}'_i)\cap[t_i]|}{t_i}\geq f\left(\frac{|V(F)|}{\alpha(F)}-1\right)-\epsilon.\]

For infinitely many values of $i$, the color $C_i$ will be the same, which we denote $C$. Let $\mathcal{F}_i$ be family of copies of $F$ in $K_\N$ obtained by adding $n_{i-1}$ to every vertex in $\mathcal{F}'_i$. Then consider $\mathcal{F}=\cup_{C_i=C}\mathcal{F}_i$. Clearly $\mathcal{F}$ is a monochromatic copy of $\omega\cdot F$. We will show that it has upper density at least $f\left(|V(F)|/\alpha(F)-1\right)-\epsilon$.

For every $i$ with $C_i=C$, we have \begin{align*}\frac{|V(\mathcal{F})\cap [t_i+n_{i-1}]|}{t_i+n_{i-1}}\geq& \frac{|V(\mathcal{F}'_i)\cap[t_i]|}{t_i+n_{i-1}}\geq\left(f\left(\frac{|V(F)|}{\alpha(F)}-1\right)-\epsilon\right)\frac{t_i}{t_i+n_{i-1}}\\
\geq& \left(f\left(\frac{|V(F)|}{\alpha(F)}-1\right)-\epsilon\right)\frac{\tau n_i}{\tau n_i+n_{i-1}}.
\end{align*}

By taking the upper limit on the latter expression as $i\rightarrow\infty$, we conclude that $\bar d(\mathcal{F})\geq f\left(|V(F)|/\alpha(F)-1\right)-\epsilon$, and thus this value is a lower bound on $\rho(\omega\cdot F)$. Since this is valid for all $\epsilon>0$, we conclude $\rho(\omega\cdot F)\geq f(|V(F)|/\alpha(F)-1)$.\end{proof}

In the proof of Lemma \ref{maintwo} we routinely omit rounding signs, except when they are part of a definition. All rounding errors are smaller than the effect that taking stronger constants.

\begin{proof}[Proof of Lemma \ref{maintwo}] Let $\gamma=\gamma(\epsilon, F)$ be small enough (we will define how small later). Let $\ell=\lceil \gamma^{-1}\rceil$. Partition $[n]$ into $\ell$ almost equal intervals, labelled $I_1, I_2, \dots, I_\ell$ from smallest to largest. Apply the colored variant of the Szemer\'edi regularity lemma~\cite{KS96} to this edge-colored graph $K_n$ to find $M=M(\gamma)$ and a $\gamma$-regular partition $\{X_0, X_1, X_2, \dots, X_m\}$, with $m\leq M$, that refines $\{I_1, \dots, I_\ell\}$.

Now let $\kappa=\kappa(\epsilon, F)$ be small enough ($\kappa$ will be chosen before $\gamma$, so we may suppose $\gamma\ll\kappa\ll\epsilon$). Lemma \ref{refic} gives $T,N'(\kappa,s')$, for $s'=|V(F)|$. If the size of all $X_i$ with $i>0$ is at least $N'$ (which will happen if $n>(1-\gamma)^{-1}mN'$, and in particular is implied by $N>(1-\gamma)^{-1}MN'$) then by Lemma \ref{refic} we can subdivide each $X_i$ into $T$ parts $X^{i}_1, \dots, X^{i}_T$, as in the statement of that lemma. Because the sizes of the $X_i$ are all almost equal, the $Tm$ sets $X^i_j$ also have almost equal sizes. Color each set $X^i_j$ with the color $C^i_j$ that it receives from Lemma \ref{refic}. Assuming that they are all non-empty (true if $N>(1-\gamma)^{-1}TM$) we can label them $Y_1, \dots, Y_{Tm}$, with the property that $\min Y_1<\min Y_2<\ldots<\min Y_{Tm}$. We denote the color of $Y_i$ by $C_i$.

 If the pair $(X_{i_1},X_{i_2})$ is $\gamma$-regular, then the pair $(X^{i_1}_{j_1}, X^{i_2}_{j_2})$ is $T\gamma$-regular. Construct an auxiliary colored graph $J$ on $[Tm]$ as follows: the color of a vertex $v$ is the color of $Y_v$. If the pair $(Y_v,Y_w)$ is $T\gamma$-regular in the original graph, draw an edge $vw$ in $J$, whose color is the densest color in the bipartite graph $(Y_i,Y_j)$ (ties are broken arbitrarily). Because every $X_i$ is $\gamma$-regular with at least $(1-\gamma)m$ other $X_j$, each $Y_i$ is $T\gamma$-regular with at least $(1-\gamma)Tm$ other $Y_j$. The minimum degree of $J$ is at least $(1-\gamma)Tm$.

Next we apply Lemma \ref{blackbox2} to this totally colored graph. This produces a value $k\in [\delta Tm,Tm]$, a color $C$ and a subgraph $J'\subseteq J$ in which every component is either an isolated vertex of color $C$ or a $K_{r,s}$, for $r=\alpha(F)$ and $s=|V(F)|-\alpha(F)$, colored as in the statement of Lemma \ref{blackbox2}. These satisfy \[\frac{|V(J')\cap[k]|}{k}\geq f\left(\frac sr\right)-\frac\epsilon2.\] This assumes that the minimum degree condition is satisfied, which is true if $\gamma\ll\tau=\tau(\epsilon,F)$. We also have $\delta=\delta(\epsilon,F)$, with these two functions as in Lemma \ref{blackbox2}.

Now we return to our original graph on $[n]$ with a coloring given by $\Psi$. We will find in it a family $\mathcal{F}$ of vertex-disjoint copies of $F$, such that every copy is contained in parts $Y_i$ corresponding to a component of $J'$, according to the following restrictions:

\begin{itemize}
\item If the corresponding component of $J'$ is an isolated vertex $v$, then there is no restriction: any copy of $F$ in $Y_v$ can be taken.
\item If the corresponding component of $J'$ is a $K_{r,s}$ on vertex classes $\{v_1, \dots, v_r\}$ and $\{v'_1, \dots, v'_s\}$, we only consider the copies of $F$ which contain exactly $r$ vertices in $Y_{v_1}\cup\dots\cup Y_{v_r}$, and $s$ vertices in $Y_{v'_1}\cup\dots\cup Y_{v'_s}$.
\end{itemize}

Under these restrictions we consider an inclusion-maximal such family $\mathcal{F}$. Let $t=\min Y_k$. We will show that $\frac{|V(\mathcal{F})\cap[t]|}{t}$ is large enough for our purposes.

When each set $X_i$ is split into $T$ sets $X_i^1,\dots,X_i^T$, there are at most $\kappa T$ of them which cannot be almost-partitioned into cliques $K_{s'}$ of the corresponding color. In these sets $X_i^j$, assuming they are isolated vertices in $J$, the family $\mathcal{F}$ will use at least $(1-\kappa)|X_i^j|$ vertices in each of them.

On the other hand, consider a component in $J$ which is a copy of $K_{r,s}$ on vertex classes $\{v_1, \dots, v_r\}$ and $\{v'_1, \dots, v'_s\}$. There are at most $\kappa Tm$ of these components which contain a set which is not adequate, in the sense of Lemma \ref{refic}. We claim that, if all of the sets are adequate, then $\mathcal{F}$ contains at least a $1-\xi$ proportion of the vertices in $Y_{v_1}\cup\dots\cup Y_{v_r}\cup Y_{v'_1}\cup\dots\cup Y_{v'_s}$, for some $\xi$ that will be defined later. Indeed, suppose that fewer than a $1-\xi$ proportion of vertices is contained in $\mathcal{F}$. Then there are two sets, w.l.o.g. $Y_{v_1}$ and $Y_{v'_1}$, such that $\frac{|Y_{v_1}\cap V(\mathcal{F})|}{|Y_{v_1}|},\frac{|Y_{v'_1}\cap V(\mathcal{F})|}{|Y_{v'_1}|}< 1-\xi$. Let $W=Y_{v_1}\setminus V(\mathcal{F})$ and $W'=Y_{v'_1}\setminus V(\mathcal{F})$. Because the pair $(Y_{v_1}, Y_{v'_1})$ is $T\gamma$-regular, and has density at least $\frac12$ in color $C$, the bipartite graph $(W,W')$ has density at least $\frac12-T\gamma>\frac14$ in color $C$.  

Next we will select $w_1,w_2, \dots, w_r$, which will be the vertices of our copy of $F$ in $Y_{v_1}$. Select $w_1\in W$ that maximizes the number of $C$-neighbors in $W'$. Then let $w_2\in W\setminus\{w_1\}$ that maximizes the size of its $C$-neighborhood in the $C$-neighborhood of $w_1$ in $W'$. We proceed this way, each time picking the vertex that maximizes the common neighborhood with the previous choices. As long as $\xi4^{-r}>T\gamma$ (which will be true because $\gamma\ll\xi,T^{-1}$), we can use regularity to ensure that the density of the $C$-colored edges between the common neighborhood and $W$ is at least $\frac 14$. 

Let $Z$ be the common $C$-neighborhood of $w_1,w_2,\dots,w_r$ in $W'$. It has size at least $4^{-r}|W'|\geq \xi4^{-r}|Y_{v'_1}|$. If $\xi4^{-r}>\kappa$ (which again holds because $\kappa\ll\xi)$, by the partition of Lemma \ref{refic} there is a clique of size $s'=r+s$ of color $C$ in $Z$. We can take $w'_1, w'_2, \dots, w'_s$ in this clique as the remaining vertices of $F$. We have thus found a new copy of $F$, which contradicts the maximality of $\mathcal{F}$. This completes the proof that $\mathcal{F}$ contains at least a $1-\xi$ proportion of the vertices in $Y_{v_1}\cup\dots\cup Y_{v_r}\cup Y_{v'_1}\cup\dots\cup Y_{v'_s}$.

We are ready to estimate $\frac{|V(\mathcal{F})\cap[t]|}{t}=1-\frac{|[t]\setminus V(\mathcal{F})|}{t}$. We will do so by giving a lower bound on $t$, and an upper bound on the vertices of $[t]$ which do not belong to $\mathcal{F}$.

Since each set $Y_i$ satisfies $\max Y_i-\min Y_i\leq \ell^{-1}n\leq \gamma n$ (because they are contained in some interval $I_j$), from $t=\min Y_k$ we have $Y_1\cup\dots\cup Y_k\subseteq [t+\gamma n]$. It is also true that $|Y_1\cup\dots\cup Y_{Tm}|=|[n]\setminus X_0|\geq (1-\gamma)n$. Since the $|Y_i|$ are almost equal, we have $|Y_i|\geq \frac{(1-\gamma)n}{Tm}$, and \[t=|[t+\gamma n]|-\gamma n\geq k|Y_i|-\gamma n\geq \frac{(1-\gamma)n}{Tm}k-\gamma n.\]

Next we will estimate $|[t]\setminus V(\mathcal{F})|$. Every vertex $z$ which belongs to $[t]$ but not to $V(\mathcal{F})$ is in at least one of the following classes:

\begin{itemize}
    \item They are in $X_0$.
    \item They are in a class $Y_v$ with $v\notin J'$.
    \item They are in a class $Y_v$, where $v$ is an isolated vertex in $J'$, but $Y_v$ is not adequate as in Lemma \ref{refic}.
    \item They are in a class $Y_v$, where $v$ is an isolated vertex in $J'$, and $Y_v$ is adequate, but $z$ is not in $\mathcal{F}$.
    \item They are in a class $Y_v$ with $v$ belonging to a $K_{r,s}$ in $J'$, where some sets $Y_{v'}$ corresponding to a vertex of this component is not adequate.
    \item They are in a class $Y_v$ with $v$ belonging to a $K_{r,s}$ in $J'$, and all sets $Y_{v'}$ corresponding to vertices of this component are adequate, but $z$ is not in $\mathcal{F}$.
\end{itemize}

The sum of the number of vertices in each class can be upper-bounded by \[\gamma n+ \left(1-f\left(\frac sr\right)+\frac\epsilon2\right)k\frac{n}{Tm}+\kappa n+\kappa n+(r+s)\kappa n+\xi n,\] respectively. If $\xi,\kappa$ and $\gamma$ are chosen small enough after choosing $\epsilon$, this can be upper-bounded by \[|[t]\setminus V(\mathcal{F})|\leq \left(1-f\left(\frac sr\right)+\frac{2\epsilon}3\right)k\frac{n}{Tm}.\]

\noindent Combining both bounds, we have \begin{align*}\frac{|V(\mathcal{F})\cap[t]|}{t}=&1-\frac{|[t]\setminus V(\mathcal{F})|}{t}
\geq 1-\frac{\left(1-f\left(\frac sr\right)+\frac{2\epsilon}3\right)k\frac{n}{Tm}}{ \frac{(1-\gamma)n}{Tm}k-\gamma n}\\
=&1-\frac{1-f\left(\frac sr\right)+\frac{2\epsilon}3}{1-\gamma-\gamma\frac{Tm}{k}}
\geq 1-\frac{1-f\left(\frac sr\right)+\frac{2\epsilon}3}{1-\gamma-\gamma\delta^{-1}}
\geq f\left(\frac sr\right)-\epsilon,\end{align*}
since $\gamma\ll\delta,\epsilon$. To complete the proof of Lemma \ref{maintwo}, we observe that there exists $\tau>0$ such that $t\geq \tau n$, namely \[t\geq \frac{(1-\gamma)n}{Tm}k-\gamma n\geq \left(\delta(1-\gamma)-\gamma\right)n.\qedhere\]

\end{proof}

\section{Proof of Theorem \ref{triaex}}

The proof of Theorem \ref{triaex} borrows ideas from the proof by Burr, Erd\H{o}s and Spencer \cite{BurErdSpe} for the Ramsey number of $n$ disjoint triangles, $R(n\cdot K_3, n\cdot K_3)=5n$. The following configuration plays a crucial role in both results:

\begin{defi} A {\text bowtie} is an edge-colored graph on five vertices, formed by a red triangle and a blue triangle sharing a vertex.\end{defi}

\begin{lemma}\label{lemmaik} Let $\Psi:K_n\rightarrow \{R,B\}$, let $F$ be the largest monochromatic family of disjoint triangles, and let $F'$ be the largest family of disjoint bowties. Then $3|F|+2|F'|\geq n-5$.\end{lemma}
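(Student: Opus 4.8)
\noindent\emph{Proof plan.} The strategy is to isolate a single combinatorial fact and then run a greedy packing argument around it. The fact I would establish first is that \emph{any two monochromatic triangles of opposite colours together span a bowtie}. Two such triangles cannot share an edge (it would be both red and blue), so they meet in at most one vertex; if they meet in exactly one vertex they already form a bowtie on five vertices. Otherwise they are disjoint, say a red triangle on $\{a_1,a_2,a_3\}$ and a blue triangle on $\{b_1,b_2,b_3\}$, and I claim these six vertices span a bowtie. If not, then no $a_i$ has two blue edges to $\{b_1,b_2,b_3\}$ -- for two such would give a blue triangle meeting the red triangle in $a_i$ alone -- so each $a_i$ sends at least two red edges across, giving at least $6$ red crossing edges; symmetrically each $b_j$ sends at least two blue edges across, giving at least $6$ blue crossing edges. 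Since there are only $9$ crossing edges and $6+6>9$, we reach a contradiction.

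Granting this, let $F'$ be a maximum family of vertex-disjoint bowties. Being maximum it is inclusion-maximal, so the set $U$ of the $n-5|F'|$ vertices outside $F'$ spans no bowtie. Inside $U$ choose an inclusion-maximal family $\mathcal{T}$ of vertex-disjoint monochromatic triangles; maximality together with $R(3,3)=6$ gives $3|\mathcal{T}|\ge |U|-5$. By the fact above, $\mathcal{T}$ cannot contain two triangles of opposite colours, as these would produce a bowtie inside $U$; hence every triangle of $\mathcal{T}$ has one common colour $C$ (choose $C$ arbitrarily if $\mathcal{T}=\emptyset$). Each bowtie of $F'$ contains a triangle of colour $C$, and adjoining one such triangle per bowtie to $\mathcal{T}$ yields $|\mathcal{T}|+|F'|$ pairwise disjoint triangles all of colour $C$, so $|F|\ge |\mathcal{T}|+|F'|$. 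Then $3|F|\ge 3|\mathcal{T}|+3|F'|\ge(|U|-5)+3|F'| = (n-5|F'|-5)+3|F'| = n-2|F'|-5$, i.e.\ $3|F|+2|F'|\ge n-5$ (the inequality being vacuous when $n\le 5$).

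The only substantive ingredient is the pairing fact, and inside it the one-line count $6+6>9$; conceptually, a bowtie is exactly the gadget that lets us append a triangle of whichever colour the leftover packing has committed to, which is why each bowtie is charged only $2$ rather than $5$ uncovered vertices. The remainder is routine bookkeeping with maximal packings; the points to watch are that ``largest'' implies ``inclusion-maximal'' (so $U$ is genuinely bowtie-free) and that the leftover bound $R(3,3)-1=5$ is precisely the $-5$ in the statement. I do not anticipate a real obstacle.
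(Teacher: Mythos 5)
Your proof is correct and follows essentially the same route as the paper: establish that two vertex-disjoint monochromatic triangles of opposite colours span a bowtie, take a maximum bowtie packing $F'$, observe that the leftover set is bowtie-free so its maximal triangle packing is monochromatic and misses at most $R(3,3)-1=5$ vertices, and then augment that packing with one triangle of the right colour from each bowtie. The only cosmetic difference is your double-count ($6+6>9$) for the pairing fact versus the paper's pigeonhole on the majority colour of the nine crossing edges.
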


\begin{proof} We start by noting that, if a two-edge-colored $K_6$ contains a vertex-disjoint red triangle and a blue triangle, then it also contains a bowtie. This is because out of the nine edges between both triangles, there are at least five with the same color, w.l.o.g.~red. By the pigeonhole principle, there is a vertex $v$ in the blue triangle incident to at least two red edges. Then the original blue triangle plus the two red-neighbors of $v$ form a bowtie.

Now consider the largest family $F'$ of vertex-disjoint bowties. The set of vertices not in $F'$ cannot contain triangles in both colors, by the argument above. Suppose w.l.o.g.~that all remaining triangles are red. Let $F''$ be a maximal family of vertex-disjoint red triangles in $V(K_n)\setminus V(F')$. We know that $|V(F')|+|V(F'')|\geq n-5$, because the remaining vertices do not contain a monochromatic triangle in either color. Now note that there is a family of disjoint red triangles in $K_n$ of size $|F'|+|F''|$, obtained by taking the triangles in $F''$ and the red triangles in each bowtie of $F'$. Thus we get $|F|\geq |F'|+|F''|$, and \[3|F|+2|F'|\geq 3(|F'|+|F''|)+2|F'|=5|F'|+3|F''|=|V(F')|+|V(F'')|\geq n-5,\] as we wanted to prove.\end{proof}

The proof of the following lemma is a simple (but slightly cumbersome) case analysis:

\begin{lemma}\label{bowcase} Let $W_1, W_2$ be two vertex-disjoint bowties in a red-blue edge-coloring of $K_{10}$. Then either there are two vertex-disjoint triangles of the same color using four vertices of $W_1$ or there are nine vertices containing two vertex-disjoint red triangles and two vertex-disjoint blue triangles.\end{lemma}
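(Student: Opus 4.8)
The plan is to assume the first alternative fails and to extract enough structure from that failure to exhibit the second. Fix notation: write $W_1$ as a red triangle $\{a,b_1,b_2\}$ and a blue triangle $\{a,c_1,c_2\}$ glued at the centre $a$, and $W_2$ as a red triangle $\{a',b_1',b_2'\}$ and a blue triangle $\{a',c_1',c_2'\}$ glued at $a'$; all ten vertices are distinct. Since two vertex-disjoint monochromatic triangles of the same colour meeting $W_1$ in exactly four vertices automatically meet $W_2$ in exactly two (they have six vertices altogether), I will call such a configuration a \emph{good pair}, so the hypothesis to refute is that $W_1\cup W_2$ contains no good pair.

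The first step is to harvest the cheap consequences of there being no good pair. The triangle $\{a,b_1,b_2\}$ is red and lies inside $W_1$, so for every edge $pq$ of the red triangle $\{a',b_1',b_2'\}$ the triangle $\{c_i,p,q\}$ cannot be red for $i\in\{1,2\}$ (it would form a good pair with $\{a,b_1,b_2\}$); hence each of $c_1,c_2$ sends at most one red edge to $\{a',b_1',b_2'\}$, and dually, via the colour-reversing relabelling $b\leftrightarrow c$, $b'\leftrightarrow c'$, each of $b_1,b_2$ sends at most one blue edge to $\{a',c_1',c_2'\}$. Pigeonholing the at least four blue edges from $\{c_1,c_2\}$ to $\{a',b_1',b_2'\}$ produces a vertex $z\in\{a',b_1',b_2'\}$ with $zc_1,zc_2$ blue, so $\{z,c_1,c_2\}$ is a blue triangle; dually there is $z'\in\{a',c_1',c_2'\}$ with $\{z',b_1,b_2\}$ a red triangle.

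Next I would case-split on where $z$ and $z'$ land. If $z$ can be chosen in $\{b_1',b_2'\}$ and $z'$ in $\{c_1',c_2'\}$, then the red triangles $\{z',b_1,b_2\}$ and $\{a',b_1',b_2'\}$ are vertex-disjoint, the blue triangles $\{z,c_1,c_2\}$ and $\{a',c_1',c_2'\}$ are vertex-disjoint, and a direct count shows that together they use exactly nine of the ten vertices (all but $a$) — this is the second alternative. Otherwise some side admits no such leaf; the ``at most one'' constraints then pin down a rigid pattern, in the extreme case $z=z'=a'$ forcing $a'b_1,a'b_2$ red and $a'c_1,a'c_2$ blue (so $a'$ is joined to the leaves of $W_1$ exactly as $a$ is), together with matching-type restrictions on the edges between the leaves of $W_1$ and those of $W_2$. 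In this regime I would manufacture the missing red triangle from $\{b_1',b_2'\}$ plus a third vertex of $\{a,c_1',c_2'\}$ and the missing blue triangle from $\{c_1',c_2'\}$ plus a third vertex of $\{a,b_1',b_2'\}$, invoking the no-good-pair hypothesis once more — this time through the configurations consisting of two triangles each supported on two leaves of $W_1$ and one vertex of $W_2$ — and, where those do not suffice, a final pigeonhole on the five edges joining $a$ to $W_2$.

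I expect the main obstacle to be bookkeeping rather than any single clever step. The constraints forced by ``no good pair'' come in several flavours — a $W_1$-triangle plus a triangle supported on one leaf of $W_1$ and two vertices of $W_2$, and two triangles each supported on two leaves of $W_1$ and one vertex of $W_2$ — each invariant under the symmetries $b_1\leftrightarrow b_2$, $c_1\leftrightarrow c_2$, $b_1'\leftrightarrow b_2'$, $c_1'\leftrightarrow c_2'$ and colour reversal, and the real work is checking that in every configuration they leave open one can name two vertex-disjoint red triangles and two vertex-disjoint blue triangles whose vertex sets overlap in at least three points. The rigid case $z=z'=a'$ is the tightest, and that is where the cumbersome casework concentrates.
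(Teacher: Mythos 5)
Your opening observations are correct and give a genuinely different entry point than the paper's (which fixes the colour of the centre–centre edge $C_1C_2$ and propagates forced colours linearly through a long chain of implications). The deduction that each of $c_1,c_2$ sends at most one red edge into $\{a',b_1',b_2'\}$ (and its dual) is sound, the pigeonhole extraction of a blue triangle $\{z,c_1,c_2\}$ with $z\in\{a',b_1',b_2'\}$ and a red triangle $\{z',b_1,b_2\}$ with $z'\in\{a',c_1',c_2'\}$ is sound, and the ``easy'' case where $z$ may be taken among $\{b_1',b_2'\}$ and $z'$ among $\{c_1',c_2'\}$ is genuinely finished: the four named triangles omit only $a$, so they occupy exactly nine vertices and give the second alternative.

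However, what you call the rigid regime is not a small mop-up; it is the real content of the lemma, and you have not proved it. In the case $z=z'=a'$, the natural candidate pairs (e.g.\ $\{a,b_1,b_2\}$ with $\{a',b_1',b_2'\}$ for red, $\{a,c_1,c_2\}$ with $\{a',c_1',c_2'\}$ for blue) use all ten vertices, so some nontrivial replacement is required. Your proposed fix --- find $x\in\{a,c_1',c_2'\}$ with $\{x,b_1',b_2'\}$ red and $y\in\{a,b_1',b_2'\}$ with $\{y,c_1',c_2'\}$ blue --- does not in itself yield nine vertices: if $x=a$ (resp.\ $y=a$) the union is again all of $K_{10}$, so you actually need $x\in\{c_1',c_2'\}$ \emph{and} $y\in\{b_1',b_2'\}$, and nothing in your constraints forces this. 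Moreover, the ``matching-type restrictions'' you derive (say $c_1b_1',c_2b_2'$ red and $c_1b_2',c_2b_1'$ blue, and dually on the other side) rule out the obvious candidates $\{c_i,b_1',b_2'\}$, so the desired $x$ and $y$ have to come from edges ($ab_i'$, $ac_i'$, $b_ib_j'$, $c_ic_j'$, and the cross edges among the $W_2$ leaves) that your argument has not touched; ``a final pigeonhole on the five edges joining $a$ to $W_2$'' is a gesture, not a derivation. You also leave unaddressed the intermediate cases where exactly one of $z,z'$ is forced to equal $a'$. The phrasing ``I would manufacture\ldots'', ``where those do not suffice\ldots'', and ``the real work is checking\ldots'' makes clear you have not carried out the case analysis, so this is an incomplete proof with a genuine gap precisely where the difficulty lies.
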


\begin{figure}
\begin{centering}
\includegraphics[width=50mm]{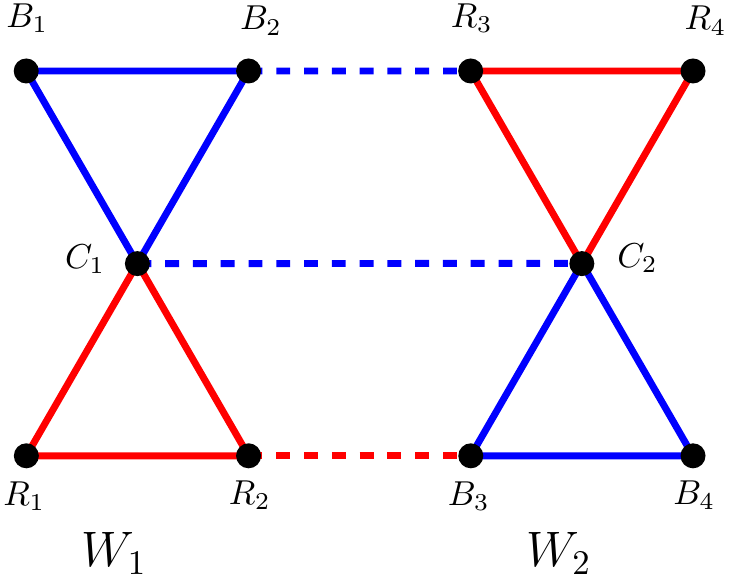}
\caption{Labeling of the vertices of $W_1$ and $W_2$.}
\label{bowt}
\par\end{centering}
\end{figure}

\begin{proof}
We label the vertices of $W_1$ and $W_2$ as in Figure \ref{bowt}. Whenever a target configuration from the statement appears, we will denote it by two or four triangles between square brackets. We suppose that the coloring is such that none of the configurations from the statement appears, and we will reach a contradiction.

Without loss of generality the edge $C_1C_2$ is blue. At least one of the edges $B_2R_3$ or $B_2R_4$ must be blue, because otherwise $[C_1R_1R_2,B_2R_3R_4]$. We will assume w.l.o.g.~that $B_2R_3$ is blue. For the same reason, we assume that $R_2B_3$ is red. Then the color of some edges is forced:

\begin{itemize}
    \item $C_1R_3$ is red, otherwise $[C_1R_1R_2,C_2R_3R_4,C_1B_2R_3,C_2B_3B_4]$.
    \item $C_1B_3$ is blue, otherwise $[C_1R_2B_3,C_2R_3R_4,C_1B_1B_2,C_2B_3B_4]$.
    \item $B_1R_3$ is red, otherwise $[C_1R_1R_2,C_2R_3R_4,B_1B_2R_3,C_1C_2B_3]$.
    \item $B_1C_2$ is blue, otherwise $[B_1C_2R_3,C_1R_1R_2]$.
    \item $B_1R_4$ is blue, otherwise $[B_1R_3R_4, C_1R_1R_2]$.
    \item $B_2R_4$ is red, otherwise $[C_1R_1R_2,C_2R_3R_4,B_1B_2R_4,C_1C_2B_3]$.
    \item $B_2C_2$ is blue, otherwise $[C_1R_1R_2, B_2C_2R_4]$.
    \item $C_1R_4$ is red, otherwise $[C_1R_1R_2,C_2R_3R_4,C_1B_1R_4,C_2B_3B_4]$.
\end{itemize}

At this point, we split our analysis into two cases, depending on the color of $R_1B_1$. If $R_1B_1$ is blue:
\begin{itemize}
    \item $R_1B_2$ is red, otherwise $[B_1B_2R_1,C_1C_2B_3]$. 
    \item $R_1R_4$ is red, otherwise $[B_1R_1R_4,C_1C_2B_2]$.
    \item $R_2R_3$ is blue, otherwise $[B_2R_1R_4,C_1R_2R_3]$.
\end{itemize}

But then we reach a contradiction with the color of $R_2B_2$. If it is red, then $[R_1R_2B_2,C_1R_3R_4]$. If it is blue, then $[R_2B_2R_3,C_1C_2B_1]$.

We suppose now that $R_1B_1$ is red. Then:
\begin{itemize}
    \item $B_1R_2$ is blue, otherwise $[R_1R_2B_1, C_1R_3R_4]$.
    \item $B_2R_2$ is red, otherwise $[B_1B_2R_2, C_1C_2B_3]$.
    \item $R_2R_4$ is red, otherwise $[B_1R_2R_4, C_1C_2B_2]$.
    \item $R_1R_3$ is blue, otherwise $[B_1R_1R_3, C_1R_2R_4]$.
\end{itemize}

But then we reach a contradiction with the color of $R_1B_2$. If it is red, then $[R_1R_2B_2,C_1R_3R_4]$. If it is blue, then $[R_1R_3B_2,B_1C_1C_2]$. This concludes the case analysis.\end{proof}

We now have the necessary preparations for the proof of Theorem \ref{triaex}. Similarly to the proof of Theorem \ref{mainfac}, we will here prove a finitary version. The reason that Theorem \ref{triaex} follows from the lemma below is the same argument by which Theorem~\ref{mainfac} follows from Lemma \ref{maintwo}.

\begin{lemma}
Let $\delta=\frac{4\sqrt{7}+2}{27}=0.46603\dots$ and $\gamma=1-\frac{1}{\sqrt{7}}= 0.62203\dots$. For every $\Psi:E(K_n)\rightarrow \{R,B\}$ there exists $k\in\{\lfloor\delta n\rfloor,n\}$ and a monochromatic family of disjoint triangles $F$ with $|V(F)\cap [k]|\geq \gamma k-7$. 
\end{lemma}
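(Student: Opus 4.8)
The plan is to prove this finitary lemma (the infinite bound $\rho(\omega\cdot K_3)\ge 1-1/\sqrt7$ then follows by the telescoping argument already indicated for Theorem~\ref{mainfac}). Write $m=\lfloor\delta n\rfloor$; the whole argument is a win--win between the two permitted scales $k=m$ and $k=n$. The basic tool is the Burr--Erd\H{o}s--Spencer decomposition of a set $X\subseteq[n]$, as in the proof of Lemma~\ref{lemmaik}: take a maximum family $\mathcal B_X$ of vertex-disjoint bowties in $X$; the complement $X\setminus V(\mathcal B_X)$ then carries triangles of at most one colour $C_X$, from which we extract a maximum family $\mathcal M_X$ of disjoint $C_X$-triangles, leaving at most $5$ vertices (no monochromatic triangle). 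This yields $5|\mathcal B_X|+3|\mathcal M_X|\ge|X|-5$, and taking one $C_X$-triangle from each bowtie of $\mathcal B_X$ together with $\mathcal M_X$ gives a monochromatic family covering at least $3(|\mathcal B_X|+|\mathcal M_X|)\ge|X|-5-2|\mathcal B_X|$ vertices of $X$.

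Applying this with $X=[m]$: if $|\mathcal B_{[m]}|\le\frac{(1-\gamma)m}{2}+1$ then the family above covers at least $\gamma m-7$ vertices of $[m]$ and we finish with $k=m$. So I may assume $|\mathcal B_{[m]}|>\frac{(1-\gamma)m}{2}+1$; since bowties are disjoint this also gives $|\mathcal B_{[m]}|\le m/5$, hence $\mathcal B_{[m]}$ covers all but a fraction $1-\frac{5}{2\sqrt7}<0.056$ of $[m]$, with $|\mathcal M_{[m]}|$ correspondingly tiny. Running the decomposition on $X=(m,n]$ too: if $|\mathcal B_{(m,n]}|$ is small then $|\mathcal M_{(m,n]}|$ is large, and combining the monochromatic triangles of $(m,n]$ with (colour-flexible) bowtie triangles of $\mathcal B_{[m]}$ should cover $\ge\gamma n-7$ vertices and finish with $k=n$; so I may also assume $|\mathcal B_{(m,n]}|$ is large enough to pick, for every bowtie of $\mathcal B_{[m]}$, a distinct partner bowtie in $\mathcal B_{(m,n]}$.

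Now the key step: for each such pair $(W,W')$, $W\in\mathcal B_{[m]}$, apply Lemma~\ref{bowcase} with $W_1=W$. If the first alternative holds we get two disjoint triangles of one colour using \emph{four} of the five vertices of $W$ --- all in $[m]$. Collecting these over the pairs in the first alternative, plus $\mathcal M_{[m]}$ (recoloured to agree, using the colour-flexibility of bowties), covers at least $4|\mathcal B_{[m]}|+3|\mathcal M_{[m]}|\ge m-5-|\mathcal B_{[m]}|\ge\tfrac45 m-5>\gamma m-7$ vertices of $[m]$ --- so if a fixed proportion of pairs fall in the first alternative, $k=m$ works. It remains to handle the case that most pairs fall in the second alternative, each giving nine vertices carrying both a red $2K_3$ and a blue $2K_3$: here I would use this rich structure together with the counts $5|\mathcal B_X|+3|\mathcal M_X|\ge|X|-5$ (for $X=[m]$ and $X=(m,n]$) and the colour-flexibility of all bowtie triangles to fix one colour for the whole family $F$ and push its coverage past either $\gamma m-7$ vertices in $[m]$ or $\gamma n-7$ vertices in $[n]$. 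The $[m]$-bound has the shape $(\#\text{first-alternative pairs})+m-O(1)-2|\mathcal B_{[m]}|$ and the $[n]$-bound the shape $n-O(1)-2|\mathcal B_{[m]}|-2|\mathcal B_{(m,n]}|$; balancing these against $|\mathcal B_{[m]}|\le m/5$ and $|\mathcal B_{(m,n]}|\le (n-m)/5$ is exactly what pins $\delta=\frac{4\sqrt7+2}{27}$ (the $\sqrt7$ coming from the quadratic hidden in this optimization) and gives $\gamma=1-1/\sqrt7$ as the common value.

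The hard part is precisely this last case analysis. A single bowtie gives only $3$ of its $5$ vertices to a monochromatic family and a pair only $6$ of $10$, so all of the improvement over the bound in Theorem~\ref{previous}\ref{oldbes} must be extracted from the \emph{asymmetric} first alternative of Lemma~\ref{bowcase} used across the two scales, carefully played off against the symmetric second alternative and the colour-coordination constraints. Getting the two resulting density estimates to meet exactly at $\gamma$ for the optimal $\delta$, and keeping all additive losses within the stated $-7$, is where the care is needed.
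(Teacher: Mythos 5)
You take the same route as the paper: split $[n]$ at $m=\lfloor\delta n\rfloor$, run the Burr--Erd\H{o}s--Spencer decomposition into a maximum family of disjoint bowties plus a maximum monochromatic triangle family on each side (this is exactly Lemma~\ref{lemmaik}), pair bowties from $[m]$ with bowties from $(m,n]$, and feed each pair to Lemma~\ref{bowcase}. The early win--win reductions ($|\mathcal B_{[m]}|$ small gives $k=m$; $|\mathcal B_{(m,n]}|$ small gives $k=n$) are also in the paper, phrased there as the three inequalities $3|F_1|\geq\gamma\delta n-7$, $3|F_1|+3|F_2'|\geq\gamma n-7$, $3|F_1'|+3|F_2|\geq\gamma n-7$ that one assumes all fail.

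But the proposal stops precisely where the lemma actually lives. The paper closes the argument by turning all the ``assume this fails'' statements into a linear system --- the Lemma~\ref{lemmaik} counts for both halves, the pigeonhole bound $3q+\tfrac{q_1}{2}$ on $[\delta n]$-vertices from the first alternative, the $\tfrac35(n+q_2)$ bound coming from the second alternative together with the $R(nK_3,nK_3)=5n$ density $3/5$, and $q_1+q_2\geq q$ --- and then exhibits two explicit nonnegative linear combinations that evaluate to $(\cdot)n-O(1)<0$ when $\delta=\frac{4\sqrt7+2}{27}$ and $\gamma=1-1/\sqrt7$, a contradiction. Your write-up defers exactly this (``The hard part is precisely this last case analysis'') and only sketches the shape of the two competing estimates; it does not write down the inequality system, does not verify inconsistency, and does not check the additive loss stays within $-7$. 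Two smaller but real issues: you ``may also assume'' $|\mathcal B_{(m,n]}|\geq|\mathcal B_{[m]}|$ without an argument, whereas the paper handles the opposite case with its own separate contradiction \emph{before} any pairing; and ``recoloured to agree, using the colour-flexibility of bowties'' glosses over the fact that the two disjoint triangles from the first alternative of Lemma~\ref{bowcase} have a \emph{fixed} colour --- the paper needs a pigeonhole (hence the $q_1/2$) to match colours, not bowtie flexibility. As written, the proposal identifies the ingredients but does not prove the statement.
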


\begin{proof}
Let $F_1$ be a maximum monochromatic family of disjoint triangles, and $F'_1$ be a maximum family of disjoint bowties, both in $[\delta n]$. Let $F_2, F'_2$ be the families defined similarly in $(\delta n,n]$. If $3|F_1|\geq \gamma\delta n-7$, or $3|F_1|+3|F'_2|\geq \gamma n-7$, or $3|F'_1|+3|F_2|\geq \gamma n-7$, then we are done. Therefore we assume the opposite.

Suppose first that $|F'_1|\geq|F'_2|$. This leads to a contradiction, because, by Lemma~\ref{lemmaik}, $0<(3|F_1|+2|F'_1|-\lfloor\delta n\rfloor+5)+2(3|F_2|+2|F'_2|-(1-\delta)n+5)+(\gamma\delta n-7-3|F_1|)+2(\gamma n-7-3|F'_1|-3|F_2|)+4(|F'_1|-|F'_2|)<(\gamma\delta+2\gamma+\delta-2)n-5<0$. Therefore we will assume $|F'_2|\geq |F'_1|$.

Let $q=|F'_1|$. Let $(W_1^1,W_2^1), (W_1^2,W_2^2), \dots, (W_1^q, W_2^q)$ be a family of pairs of bowties with $W_1^i\subseteq [\delta n]$ and $W_2^i\subseteq (\delta n, n]$. For every pair $(W_1^i, W_2^i)$ there is either a set of two disjoint triangles of the same color which contain four vertices of $W_1^i$ or nine vertices in $V(W_1^i\cup W_2^i)$ which contain two disjoint triangles of each color. 

Let $q_1$ and $q_2$ be the number of pairs $(W_1^i, W_2^i)$ for which we obtain the former or the latter, respectively. We have $q_1+q_2\geq q$. We can find a monochromatic family of triangles that uses $3(q-\frac{q_1}2)+4\frac{q_1}{2}=3q+\frac{q_1}{2}$ vertices from $[\delta n]$, and a monochromatic family of triangles that uses $6q_2+\frac{3}{5}(n-9q_2)=\frac 35(n+q_2)$ vertices from $[n]$. Therefore we must assume $3q+\frac{q_1}{2}< \gamma\delta n-7$ and $\frac{3}{5}(n+q_2)<\gamma n-7$. But then we have $0<7(3|F_1|+2q-\lfloor\delta n\rfloor +5)+7(\gamma\delta n-7-3|F_1|)+4(\gamma\delta n-7-3q-\frac{q_1}{2})+\frac{10}{3}(\gamma n-7-\frac{3}{5}(n+q_2))+2(q_1+q_2-q)<(11\gamma\delta+\frac{10}{3}\gamma-7\delta-2)n-\frac{175}{3}<0$, a contradiction.

\end{proof}

\section{Open problems}

As we have seen, there are many different conditions that guarantee that the upper bound on $\rho(\omega\cdot F)$ from Theorem \ref{previous}\ref{oldupp} is tight. This is the case if an independent set that minimizes $\frac{|N(I)|}{|I|}$ is maximal (in which case it matches Theorem \ref{mainfac}) or if $N(I)$ is also independent (in which case it matches Theorem~\ref{previous}\ref{oldlow}).

Remarkably, we do not know of any graph $F$ for which Theorem \ref{previous}\ref{oldupp} is not tight. That leads to the following question:

\begin{ques}\label{openq}Is it true that, for every finite graph $F$, we have \[\rho(\omega\cdot F)= f\left(\min\limits_{\substack{I\text{ indep. in }F\\I\neq\emptyset}}\frac{|N(I)|}{|I|}\right)\text{?}\] \end{ques}

The smallest graphs $F$ which are not covered by the previous cases have seven vertices, and three of them are depicted in Figure \ref{nbex}.

\begin{figure}
\begin{centering}
\includegraphics[width=120mm]{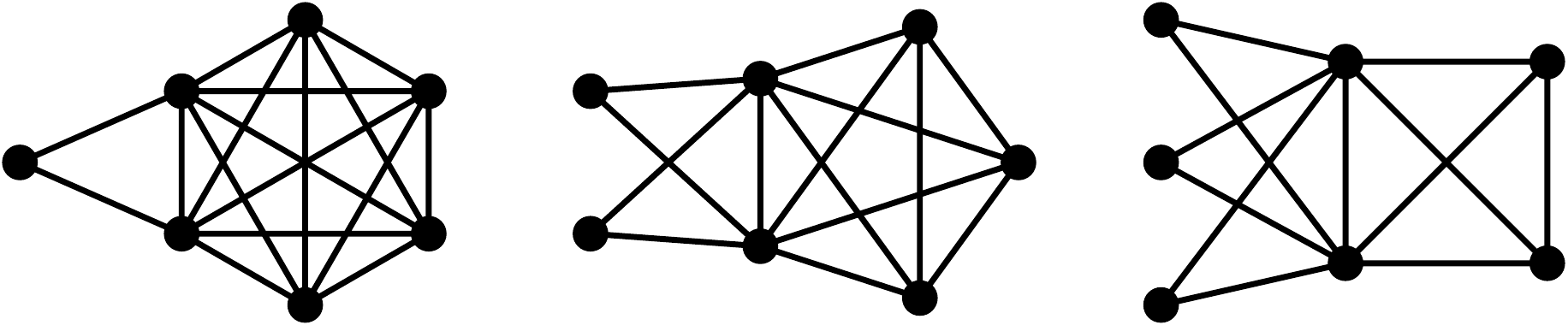}
\caption{Three vertex-minimum graphs $F$ for which $\rho(\omega\cdot F)$ is not known. If Question \ref{openq} has an affirmative answer, their Ramsey densities are $f(2)$, $f(1)$ and $f(2/3)$, respectively.}
\label{nbex}
\par\end{centering}
\end{figure}

In Theorem \ref{triaex}, we improve the lower bound on $f(2)$. There is no reason why the same argument cannot be used to improve the bound on $f(x)$ for all $x>1$, but there is a catch: a straightforward generalization can only be applied for rational $x$, and even then the method appears to be scale-dependent. To clarify this, consider the graphs $F_1=K_{a+b}\setminus K_b$ (the graph on $a+b$ vertices whose complement is an $a$-clique) and $F_2=K_{2(a+b)}\setminus K_{2b}$. They satisfy $\rho(\omega\cdot F_1)=f(a/b)=\rho(\omega\cdot F_2)$ by Corollary \ref{corclicyc}. However, it is not clear what the analogous for Lemma \ref{bowcase} should be in each case, and different versions might produce different bounds on $f(a/b)$, if applied to $F_1$ and $F_2$.

\begin{ques}
By adapting the method of Theorem \ref{triaex}, is it possible to find a closed expression for a continuous function $\bar f(x)$ such that $\frac{x+1}{2x+1}< \bar f(x)\leq f(x)$ for all $x>1$?
\end{ques}

\section{Acknowledgements}

The research presented here was conducted during a visit by the second author to the University of Illinois at Urbana-Champaign. We would like to thank the Berlin Mathematical School for funding this trip.

\bibliography{bibliog}
\bibliographystyle{abbrv}

\end{document}